\documentclass[svgnames]{amsart}
\usepackage[english]{babel}
\usepackage[utf8]{inputenc}

\usepackage{svg}
\usepackage{hyperref}
\hypersetup{colorlinks,citecolor=Black,linkcolor=Black,urlcolor=Black}
\usepackage{url}

\usepackage{amssymb} \usepackage{amsfonts} \usepackage{amsmath}
\usepackage{amsthm} \usepackage{epsfig, subfig} \usepackage{mathabx}
\usepackage{ amscd, amsxtra, latexsym,pb-diagram,pb-xy}
\usepackage{tikz}\usetikzlibrary{cd}
\usepackage{bbm}

\usepackage[shortlabels]{enumitem}

\theoremstyle{plain}
\newtheorem{thm}{Theorem}[section] 
\newtheorem{lem}[thm]{Lemma}
\newtheorem{claim}[thm]{Claim}
\newtheorem{prop}[thm]{Proposition}
\newtheorem{cor}[thm]{Corollary}
\newtheorem{thmintro}{Theorem}

\theoremstyle{definition}
\newtheorem{ass}{Assumption}

\newtheorem{defn}[thm]{Definition}
\newtheorem{rem}[thm]{Remark}
\newtheorem{conv}[thm]{Convention}
\newtheorem{ex}[thm]{Example}
\newtheorem{questintro}[thmintro]{Question}

\newcommand{\frakS}{\mathfrak S}

\newcommand{\C}{\mathcal C}
\newcommand{\CS}{\mathcal C S}
\newcommand{\CSN}{\mathcal C S/N}
\newcommand{\ov}{\overline}

\newcommand{\dist}{\mathrm{d}}
\newcommand{\nest}{\sqsubseteq}

\newcommand{\orth}{\bot}
\newcommand{\transverse}{\pitchfork}
\newcommand{\MCG}{\mathcal{MCG}^\pm (S)}

\newcommand{\link}{\text{Lk}}
\newcommand{\Cay}[2]{\operatorname{Cay}\left(#1,#2\right)}
\newcommand{\Supp}{\operatorname{Supp}}
\newcommand{\Aut}{\operatorname{Aut}}
\newcommand{\Out}{\operatorname{Out}}
\newcommand{\Comm}{\operatorname{Comm}}

\title[QI-rigidity of random quotients of MCG]{Random quotients of mapping class groups are quasi-isometrically rigid}

\author[G. Mangioni]{Giorgio Mangioni}
    \address{(Giorgio Mangioni) Maxwell Institute and Department of Mathematics, Heriot-Watt University, Edinburgh, UK}
    \email{gm2070@hw.ac.uk}

\begin{document}

\begin{abstract}
    We prove several rigidity properties for random quotients of mapping class groups of  surfaces, namely whose kernel is normally generated by the $n$th steps of finitely many independent random walks. Firstly, we generalise a celebrated theorem of Ivanov's: every automorphism of the corresponding quotient of the curve graph is induced by a mapping class. Next, we show that, if a finitely generated group is quasi-isometric to a random quotient, then the two groups are weakly commensurable. This uses techniques from the world of hierarchically hyperbolic groups: indeed, in the process we clarify a proof of Behrstock, Hagen, and Sisto on the quasi-isometric rigidity of mapping class groups, which might possibly be applied to other hierarchically hyperbolic groups. Finally, we show that the automorphisms groups of our quotients, as well as their abstract commensurators, coincide with the groups themselves. 

    Our results hold for a wider family of quotients, namely those whose kernel act by sufficiently large translations on the curve graph. This class also includes quotients by suitable powers of a pseudo-Anosov element.
\end{abstract}

\maketitle


\section*{Introduction}
One of the motivating perspectives on mapping class groups of orientable, finite-type surfaces has long been the analogy with higher rank lattices. One notable similarity in their behaviour is that both classes exhibit strong quasi-isometric rigidity properties: if $G$ is in one of these families and $H$ is a group quasi-isometric to $G$, then $G$ and $H$ are weakly commensurable (in the sense of Theorem~\ref{thmintro:qi_for_random} below). For mapping class groups, this result relies on Ivanov's theorem that every automorphism of the curve graph is induced by a mapping class \cite{Ivanov:autC, Korkmaz}.

In recent years there has been a growing interest in studying quotients and subgroups of mapping class groups, in order to establish the analogues of Ivanov's theorem and then deduce similar quasi-isometric rigidity properties (see e.g. \cite{Brendle_Margalit, McLeay} for a wide class of normal subgroups, and, among others, \cite{mangioni2023rigidity} for quotients by suitable powers of Dehn Twists).

In this paper, we address the problem for \emph{random quotients} of mapping class groups, which we now define. Given an acylindrically hyperbolic group $G$, let $\mu$ be a \emph{permissible} probability measure, in the sense of Definition~\ref{defn:permissible}. A \emph{random walk} on $G$ with respect to $\mu$ is a sequence $\{X_n\}_{n\in\mathbb{N}}$ of random variables, taking values in $G$, such that:
\begin{itemize}
    \item $P(X_0=e)=1$, i.e. the process starts at the identity element of $G$;
    \item For every $n\ge 1$, $P(X_n=hs\mid X_{n-1}=h)=\mu(s)$, i.e. the step between $X_{n-1}$ and $X_n$ is chosen according to the distribution $\mu$.
\end{itemize}
The reader should keep in mind the case where $G$ has a finite, symmetric generating set $S$, and $\mu$ is the uniform probability on $S$. In this setting, the $n$th step $X_n$ of the random walk is a word of $n$ letters, each chosen with uniform probability from the alphabet $S$.

Now let $\{X^1_n,\ldots, X^k_n\}_{n\in\mathbb{N}}$ be independent random walks on $G$ with respect to permissible probability measures. Let $N_n\colon eq \langle\langle X^1_n,\ldots, X^k_n\rangle\rangle,$ and call $G/N_n$ a \emph{random quotient} of $G$. A property $\mathcal P$ holds \emph{asymptotically almost surely} (a.a.s) if the probability that $G/N_n$ satisfies $\mathcal P$ approaches $1$ as the step $n$ goes to infinity. Heuristically, $G/N_n$ should a.a.s. exhibit the properties of a “generic” quotient of $G$ by $k$ “independent” relators.

\subsection*{Rigidity results}
The first rigidity theorem we prove is the analogue of Ivanov's theorem. Below, $\MCG$ denotes the \emph{extended} mapping class group, where we allow orientation-reversing mapping classes, and $\C S$ is the curve graph. To avoid technicalities coming from small surfaces, we state this and the following results for surfaces of high enough complexity.

\begin{thmintro}[{Combinatorial rigidity}]\label{thmintro:comb_for_random}
    Let $S$ be a surface of complexity at least $4$, and let $\{X^1_n, \ldots, X^k_n\}_{n\in\mathbb{N}}$ be $k$ random walks with respect to permissible probability measures. For every $k\in\mathbb{N}_{>0}$, the map $\MCG/{N_n}\to \Aut(\CS/{N_n})$, induced by the natural action of $\MCG/{N_n}$ on $\CS/{N_n}$, is  a.a.s. an isomorphism.
\end{thmintro}
%

\noindent For our second result, recall that two groups $G$ and $H$ are \emph{weakly commensurable} if there exist two finite-index subgroups $G'\le G$ and $H'\le H$ and two finite normal subgroups $L\unlhd G'$ and $M\unlhd H'$ such that $G'/L\cong H'/M$.

\begin{thmintro}[{Quasi-isometric rigidity}]\label{thmintro:qi_for_random}
    Let $S$ be a surface of complexity at least $4$, and let $\{X^1_n, \ldots, X^k_n\}_{n\in\mathbb{N}}$ be $k$ random walks with respect to permissible probability measures. For every $k\in\mathbb{N}_{>0}$ the following holds asymptotically almost surely:
    \begin{enumerate}
        \item\label{item:weak_qi_rig} Every quasi-isometry $f\colon  \MCG/N_n\to \MCG/N_n$ is within finite distance from the left multiplication by some element $g\in \MCG/N_n$.
        \item\label{item:commensurable_for_rand} If a finitely generated group $G$ and $\MCG/N_n$ are quasi-isometric then they are weakly commensurable. 
    \end{enumerate}
\end{thmintro}
\noindent We stress that the probability that~\eqref{item:commensurable_for_rand} holds does not depend on $G$, but only on $n$ and $k$. Indeed,~\eqref{item:commensurable_for_rand} follows from a ``quantitative'' version of \eqref{item:weak_qi_rig}, together with Proposition \ref{prop:qi_quantitativa} which is a general criterion for quasi-isometric rigidity of a group and is therefore of independent interest.

Finally, random quotients exhibit a strong form of algebraic rigidity, which generalises the analogue results of Ivanov's for mapping class groups \cite{Ivanov:autC}:

\begin{thmintro}[Algebraic rigidity]\label{thmintro:alg_for_rand}
Let $S$ be a surface of complexity at least $4$, and let $\{X^1_n, \ldots, X^k_n\}_{n\in\mathbb{N}}$ be $k$ random walks with respect to permissible probability measures. For every $k\in\mathbb{N}_{>0}$, the following holds asymptotically almost surely:
\begin{enumerate}
\item The map $\MCG/N_n\to \Aut(\MCG/N_n)$, which maps every element to the corresponding inner automorphism, is an isomorphism. In particular, $\Out(\MCG/N_n)$ is trivial.
\item The abstract commensurator of $\MCG/N_n$ coincides with $\MCG/N_n$ itself. In particular, every isomorphism between finite index subgroups of $\MCG/N_n$ is the restriction of an inner automorphism.
\end{enumerate}
\end{thmintro}

\subsection*{Large translation quotients}
Random quotients of mapping class groups are an instance of \emph{large translation quotients}, which are those whose kernel $N\unlhd \MCG$ acts on the curve graph with sufficiently large \emph{minimum translation length} (see Definition \ref{defn:lrq}). For these quotients, the natural map $\CS\to \CSN$ has positive injectivity radius, and is therefore a covering.

We list here some examples of large translation quotients:
\begin{enumerate}[label=(\roman*)]
    \item\label{item:rq} Random quotients are a.a.s large translation quotients, in view of Lemma~\ref{lem:random_are_LT}.
    \item The mapping class group itself, seen as the quotient by the trivial subgroup, is trivially a large translation quotient.
    \item Given a pseudo-Anosov element $g\in \MCG$, there exists $K\in\mathbb{N}_{>0}$ such that $\langle\langle g^K\rangle\rangle$ has large translation (see Corollary~\ref{cor:pA_large}: this is not obvious a priori, as the whole normal closure might in principle contain elements of small translation length).
    \item\label{item:hemb} Generalising the previous example, let $H\le \MCG$ be a \emph{hierarchically hyperbolically embedded} subgroup, in the sense of \cite[Definition 6.1]{hhs_asdim}. Then the normal closure of a suitable finite-index subgroup of $H$ has large translation (see Lemma \ref{lem:fixing_hhg_for_hhe} for details).
\end{enumerate}

\noindent Theorems~\ref{thmintro:comb_for_random} to~\ref{thmintro:alg_for_rand} then follow from the analogue results for large translation quotients, which we now state. Again, to avoid technicalities we restrict to surfaces of large complexity.

\begin{thmintro}[{see Theorem~\ref{thm:ivanov}}]\label{thmintro:comb}
    Let $S$ be a surface of complexity at least $4$, and let $N\unlhd\MCG$ be a normal subgroup. If the minimum translation length of $N$ is at least $9$, then $\MCG/N\cong \Aut(\CSN)$ via the natural action.
\end{thmintro}
\noindent In Subsection~\ref{subsec:ivanov_sporadic}, we show that a similar result also holds for most surfaces of low complexity, though the natural homomorphism might only be surjective with finite kernel (this happens already for mapping class group \cite{Korkmaz,Luo}).
 \par\medskip

The analogues of Theorems~\ref{thmintro:qi_for_random} and~\ref{thmintro:alg_for_rand} hold for those large translation quotients with a suitable \emph{hierarchically hyperbolic group} (HHG) structure, in the sense of \cite{HHSII}. Roughly, a group $G$ is a HHG if there exists a collection of hyperbolic spaces $\{\C U\}_{U\in \frakS}$ together with coarsely Lipschitz ``coordinate projections'' $\pi_U\colon  G\to \C U$ satisfying some properties. An element $U\in\frakS$ is called a \emph{domain}, and $\C U$ is its \emph{coordinate space}.  Mapping class groups of surfaces are the motivating example of HHGs, with projections given by from subsurface projections to curve graphs of subsurfaces (see e.g. \cite{HHSI}, though most of the results date back to the seminal work of Masur and Minsky \cite{MM_1, MM_2}). 

The large translation quotients we consider have a HHG structure which is somewhat ``inherited'' from the natural structure on $\MCG$ (see Convention \ref{conv:surface-inherited}). Roughly, we ask that domains of the structure are $N$-orbits of surfaces, that the top-level coordinate space coincides with $\CSN$, and that the coordinate space of the orbit of any proper subsurface $U\subsetneq S$ is quasi-isometric to the original curve graph of $U$. Remarkably, all quotients of type \ref{item:rq} to \ref{item:hemb} satisfy these assumptions.

\begin{thmintro}[{see Theorem~\ref{thm:qirigid}}]\label{thmintro:qi}
    Let $S$ be a surface of complexity at least $4$, and let $N\unlhd\MCG$ be a normal subgroup. If the minimum translation length of $N$ is at least $9$ and $\MCG/N$ has a surface-inherited HHG structure, then the following holds:
    \begin{enumerate}
     \item Every quasi-isometry $f\colon  \MCG/N\to \MCG/N$ is within finite distance from a left multiplication.
    \item If a finitely generated group $G$ and $\MCG/N$ are quasi-isometric then they are weakly commensurable.
    \end{enumerate}
\end{thmintro}

\begin{thmintro}[{see Theorem~\ref{thm:Out}}]\label{thmintro:alg}
Let $S$ be a surface of complexity at least $4$, and let $N\unlhd\MCG$ be a normal subgroup. If the minimum translation length of $N$ is at least $9$ and $\MCG/N$ has a surface-inherited HHG structure, then $$\MCG/N\cong \Aut(\MCG/N)
\cong \Comm(\MCG/N),$$ where $\Comm(\MCG/N)$ is the abstract commensurator.
\end{thmintro}

\noindent We point out that, in Theorems~\ref{thmintro:comb} to \ref{thmintro:alg}, the bound on the minimum translation length is explicit and uniform over all surfaces. Though it might possibly be sharpened, in all notable applications one is often allowed to replace $N$ with a suitable subgroup to enlarge the minimum translation length.

\subsection*{Sketch of proofs}
\subsubsection*{Combinatorial rigidity}
To show that any automorphism $\ov\phi\colon \CSN\to\CSN$ is induced by some element $\ov g\in\MCG/N$ we proceed as follows. First, we find an automorphism $\phi:\CS\to\CS$ which “lifts” $\ov \phi$, i.e. that makes the following diagram commute, where $\pi\colon \CS\to \CSN$ is the quotient map:

$$\begin{tikzcd}
    \CS\ar{r}{\phi}\ar{d}{\pi}&\CS\ar{d}{\pi}\\
    \CSN\ar{r}{\ov \phi}&\CSN
\end{tikzcd}$$

The key observation is that, if we regard both $\CS$ and $\CSN$ as simplicial \emph{complexes}, then $\pi$ is a covering map (see Lemma \ref{lem:covering}). Moreover, by a result of Harer \cite{Harer:CS_wedge}, $\CS$ is simply connected whenever $S$ has large enough complexity, and we can lift $\ov\phi$ by standard arguments of covering theory. Then one can apply Ivanov's theorem to show that the lift $\phi$ is induced by a mapping class $g\in\MCG$, and therefore its image $\ov g\in\MCG/N$ induces $\ov \phi$.

\subsubsection*{Quasi-isometric rigidity}
We follow a general strategy, developed by Behrstock, Hagen, and Sisto in \cite{quasiflat}, to prove quasi-isometric rigidity of certain HHG. The key idea from that paper is that any self-quasi-isometry $f$ of a HHG, satisfying some additional assumptions, induces an automorphism of a certain graph, which encodes the intersection patterns of maximal \emph{quasiflats} (that is, quasi-isometric embeddings of $\mathbb{R}^n$ of maximum dimension). In the case of $\MCG/N$, such graph is precisely the quotient of the curve graph, since maximal quasiflats of $\MCG/N$ morally correspond to subgroups generated by maximal families of commuting Dehn Twists. But then, by combinatorial rigidity, the automorphism of $\CSN$ is itself induced by some element $\ov g\in\MCG/N$, and with a little more effort one can show that $f$ coarsely coincides with the left multiplication by $\ov g$.

In \cite{quasiflat}, the main result to extract combinatorial data from a self-quasi-isometry of a HHG is \cite[Theorem 5.7]{quasiflat}, which is then used in \cite[Theorem 5.10]{quasiflat} to give a new proof of quasi-isometric rigidity of mapping class groups. Unfortunately, as pointed out by Jason Behrstock after that paper was published, \cite[Theorem 5.7]{quasiflat} does not apply to mapping class groups. Indeed, if $U\subsetneq S$ is a proper subsurface which is not an annulus, then any maximal collection of pairwise disjoint subsurfaces that contains $U$ must also contain the boundary annuli of $U$. Hence, mapping class groups do not satisfy \cite[Assumption 2]{quasiflat}, which roughly states that such a $U$ should be the only subsurface shared by two maximal collections of pairwise disjoint subsurfaces. We stress that \cite[Theorem 5.7]{quasiflat} is true as stated, and it has been used in \cite{VeechII, mangioni2023rigidity} to study quasi-isometric rigidity of extensions of Veech groups and Dehn twist quotients of mapping class groups, respectively.
 
Behrstock, Hagen, and Sisto, worked out a modification of their Assumptions which allows them to prove a version of \cite[Theorem 5.7]{quasiflat} which applies to the mapping class group.  Rather than writing their argument separately, they instead opted to allow me to incorporate a version of those assumptions into this work, and I am extremely grateful for this possibility. Then the proof of \cite[Theorem 5.10]{quasiflat} runs verbatim once one replaces \cite[Theorem 5.7]{quasiflat} with our Proposition \ref{lem:automorphism_of_frakS_G}, whose Assumptions \eqref{ass1} - \eqref{ass6} are satisfied by both $\MCG$ and our quotients (see Proposition \ref{lem:ass_satisfied}).

\subsubsection*{Algebraic rigidity}
By \cite[Corollary 14.4]{HHSI} $\MCG/N$ acts acylindrically on the top-level coordinate space of the HHG structure, which coincides with $\CSN$ for quotients with surface-inherited structures. Now, an isomorphism $\phi\colon H\to H'$ between finite-index subgroups of $\MCG/N$ can be seen as a self-quasi-isometry of $\MCG/N$, and therefore coarsely coincides with the left-multiplication by some element $\ov g\in \MCG/N$. One can then try to prove that the conjugation by $\ov g$ restricts to the given isomorphism on $H$. We do just that in Theorem \ref{thm:Out}, using tools from \cite{Commensurating} for acylindrical actions on hyperbolic spaces. Notice that finite normal subgroups could cause the outer automorphism group to be finite rather than trivial, so the key technical results we need is that $\MCG/N$ does not contain non-trivial finite normal subgroups, Lemma \ref{lem:nofinite}.

\subsubsection*{What is new for $\MCG$} The above results were already well-established for $\MCG$, which is trivially a large translation quotient. We summarise here the (partial) novelty of our approach.
\begin{itemize}
    \item Our proof of Theorem~\ref{thmintro:comb} does not yield a new argument for Ivanov's theorem, as it heavily relies on it.
    \item Theorem~\ref{thmintro:qi} provides a new proof of quasi-isometric rigidity of mapping class groups, first proven in \cite{BKMM}, by refining a previous attempt of Behrstock, Hagen, and Sisto from \cite{quasiflat} (see Corollary \ref{cor;qirig_mcg}). We expect that suitable adaptations of the tools from Section \ref{sec:qusiiso_of_HHG} can be used to classify quasi-isometries of other hierarchically hyperbolic groups.
    \item Finally, the original proof of Theorem~\ref{thmintro:alg} from \cite{Ivanov:autC} relies on an algebraic characterisation of powers of Dehn twists. Instead, we derive algebraic rigidity from quasi-isometric rigidity, using the machinery from \cite{Commensurating} to study automorphisms of acylindrically hyperbolic groups. To the knowledge of the author, this type of approach first appeared in a recent paper of Sisto and the author \cite{mangioni2023rigidity}, and can possibly be applied beyond the setting of mapping class groups.
\end{itemize}

\subsection*{Comparison with quotients by large powers of Dehn twists}
In view of \cite{mangioni2023rigidity}, very similar rigidity results hold for quotients of the form $\MCG/DT_K$, where $DT_K$ is the normal subgroup generated by all $K$-th powers of Dehn Twists (at least when $S$ is a punctured sphere, and conjecturally for all surfaces). Such quotients can be regarded as “Dehn filling quotients” of mapping class groups, as pointed out and explored in \cite{dfdt} and then in \cite{BHMS}, where they are proven to be hierarchically hyperbolic. 

While the proofs from \cite{mangioni2023rigidity} also rely on lifting, the projection $\CS\to \CS/DT_K$ is not a covering map, and indeed it is quite far from being locally injective; therefore lifting properties follow from different technologies, involving results from \cite{dahmani:rotating} about rotating families and the existence of finite rigid sets inside the curve graph, as defined in \cite{AL}. Moreover, our approach to quasi-isometric rigidity requires Dehn twist flats to survive in the quotient, while they disappear in $\MCG/DT_K$ for any choice of $K$. In particular, quasi-isometries of $\MCG/DT_K$ induce automorphisms of a graph which is not $\CS/DT_K$, and then one has to relate these two graphs with further combinatorial considerations.

\subsection*{Outline of the paper}
In Section \ref{sec:setting} we define large translation quotients and develop the lifting tools that are used throughout the paper. 

In Section \ref{sec:combrig} we prove combinatorial rigidity for the general case, which is Theorem \ref{thm:ivanov}, and for some surfaces of low complexity, see Subsection \ref{subsec:ivanov_sporadic}. This establishes Theorem \ref{thmintro:comb}.

In Section \ref{sec:qusiiso_of_HHG} we show that a self-quasi-isometry of a hierarchically hyperbolic group satisfying certain properties, namely Assumption \eqref{ass1} - \eqref{ass6}, induces an automorphism of a certain graph, encoding the intersections of certain maximal quasiflats (see Proposition \ref{lem:automorphism_of_frakS_G}). Then in Section \ref{sec:qirigid} we specialise this to those large translation quotients of mapping class groups with a suitable HHG structure (see Convention \ref{conv:surface-inherited}). We relate the graph from Proposition \ref{lem:automorphism_of_frakS_G} to $\CSN$, and use the combinatorial rigidity results from Section \ref{sec:combrig} to produce an element of $\MCG/N$ whose left-multiplication is within finite distance from a given quasi-isometry. This proves Theorem \ref{thmintro:qi} (see Theorem \ref{thm:qirigid}).

In Section \ref{sec:algrig} we combine quasi-isometric rigidity and some tools from \cite{Commensurating} about acylindrical actions on hyperbolic spaces to show that any automorphism between finite index subgroups of $\MCG/N$ is the restriction of the conjugation by a unique element $\ov g\in\MCG/N$ (see Theorem \ref{thm:Out} for the existence of such an element, and Lemma \ref{lem:aut_inj} for the uniqueness). Therefore, the outer automorphism group of $\MCG/N$, as well as its abstract commensurator, are the smallest possible, as clarified in Corollaries \ref{cor:outmcg} and \ref{cor:commmcg}. This proves the two parts of Theorem \ref{thmintro:alg}.

Section~\ref{sec:examples} contains examples of large translation quotients satisfying Convention~\ref{conv:surface-inherited}. Most notably, in Lemma~\ref{lem:random_are_LT} we invoke results of Abbott, Berlyne, Ng, Rasmussen, and the author \cite{randomquot_HHG} to show that random quotients fit in our framework, thus proving Theorems~\ref{thmintro:comb_for_random} to \ref{thmintro:alg_for_rand} as special cases of Theorems~\ref{thmintro:comb} to \ref{thmintro:alg}.

Finally, in Section~\ref{sec:questions} we speculate on how one could try to extend combinatorial rigidity to prove that all \emph{injective} self-maps of $\CSN$ are induced by automorphisms. 

\subsection*{Acknowledgements}
I would like to thank Jason Behrstock, Mark Hagen, and Alessandro Sisto for sharing with me their previous attempts to find a different set of assumptions under which the conclusion of \cite[Theorem 5.7]{quasiflat} holds, and for suggesting many corrections to the first draft of this paper. I am especially grateful to my supervisor Alessandro Sisto for his constant support. I am also grateful to Piotr Przytycki for suggesting a way to shorten the proof of Theorem \ref{thm:ivanov_S5}. Finally, I thank Carolyn Abbott, Daniel Berlyne, Thomas Ng, and Alexander Rasmussen for fruitful discussions.

\section{Large translation quotients}\label{sec:setting}
Let $S$ be a surface of finite type, that is, a surface obtained from a closed, connected, oriented surface after removing a finite number of points, called punctures. When we want to emphasise the genus $g$ and the number of punctures $p$ we use the notation $S_{g,p}$, and we define the \emph{complexity} of the surface as the quantity $\zeta(S_{g,p})=3g+p-3$. Unless otherwise stated, by a \emph{curve} we mean an isotopy class of simple, essential, closed curves. The \emph{curve graph}  $\C  S$ is the graph whose vertices are curves on $S$, and adjacency corresponds to disjointness. Finally, let $\MCG$ be the extended mapping class group of $S$, where we allow orientation-reversing mapping classes. If $x\in\CS^{(0)}$ is a curve and $f\in\MCG$ is a mapping class, we denote the image of $x$ under $f$ simply by $fx$.

\begin{defn}[Translation length]
    The \emph{translation length} of a mapping class $f\in\MCG$ is defined as $\min_{x\in\CS}\dist_{\CS}(x,f x)$. Similarly, the \emph{minimum translation length} of a subgroup $N\le \MCG$ is defined as 
    $$\min_{f\in N-\{1\}}\min_{x\in\CS}\dist_{\CS}(x,f x),$$
    where $1\in\MCG$ is the identity.
\end{defn}

\begin{defn}[Large translation quotient]\label{defn:lrq}
    Let $N\unlhd \MCG$ be a normal subgroup. We will say that the quotient group $\MCG/N$ is a \emph{large translation quotient} if the minimum translation length of $N$ is at least $9$.
\end{defn}

\subsection{Isometric projections}\label{subsec:projections}
For the rest of the paper, let $\MCG/N$ be a large translation quotient. Since $N$ acts on $\CS$ by simplicial automorphisms, we can consider the quotient $\pi\colon \CS\to \CSN$. Given a subgraph $\ov  X\subseteq \CSN$, we say that a subgraph $X$ of $\CS$ is a \emph{lift} of $\ov  X$ if the projection map $\pi$ restricts to an isometry between $X$ and $\ov  X$. For later purposes, we gather here several properties of the quotient:

\begin{lem}[Isometric lifts and projections]\label{lem:lift}
    The following facts hold:
    \begin{itemize}
    \item $\C S/N$ is a simplicial graph.
    \item The projection $\pi\colon \CS \to \CSN$ is $1$-Lipschitz;
    \item For every combinatorial path $\ov  \Gamma\subset \CSN$ there exists a combinatorial path $\Gamma\subset \CS$ such that $\pi(\Gamma)=\ov \Gamma$, and moreover if $\ov  \Gamma$ is geodesic then so is $\Gamma$;
    \item  $\pi$ is a local isometry. More precisely, for every $x\in \CS^{(0)}$, $\pi$ restricts to an isometry between the closed ball of radius $2$ centred at $x$, which we denote by $\ov B(x,2)$, and the closed ball of radius $2$ centred at its projection $\ov  x$, which we denote by $\ov  B(\ov  x,2)$;
    \item Every subgraph $\ov  X$ of $\CSN$ which is contained in a closed ball of radius $2$ admits a unique $N$-orbit of lifts.
    \end{itemize}
\end{lem}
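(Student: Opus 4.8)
The plan is to establish the four bullet points in sequence, each one building on the previous. The first bullet, that $\pi$ is $1$-Lipschitz, is immediate: adjacent vertices of $\CS$ have adjacent images (possibly equal, but in fact distinct since $D\ge 3$), so $\pi$ cannot increase distances. For the second bullet, I would lift a path $\ov\Gamma = (\ov x_0,\dots,\ov x_n)$ one vertex at a time: pick any lift $x_0$ of $\ov x_0$, and inductively, given $x_i$ lifting $\ov x_i$, choose the lift $x_{i+1}$ of $\ov x_{i+1}$ that is adjacent to $x_i$; such a lift exists because $\ov x_i \sim \ov x_{i+1}$ means some $N$-translate of a representative of $\ov x_{i+1}$ is adjacent to $x_i$, and it is the \emph{unique} adjacent lift because two distinct $N$-translates of $x_{i+1}$ cannot both lie within distance $1$ of $x_i$ (else they'd be at distance $\le 2$, contradicting $D\ge 3$; here $D\ge 8$ is more than enough). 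This produces $\Gamma$ with $\pi(\Gamma)=\ov\Gamma$ and $|\Gamma|=|\ov\Gamma|$. If $\ov\Gamma$ is geodesic of length $\ell = \dist_{\CSN}(\ov x_0,\ov x_n)$, then $\dist_{\CS}(x_0,x_n)\le \ell$; but $\pi$ is $1$-Lipschitz so $\dist_{\CS}(x_0,x_n)\ge \dist_{\CSN}(\ov x_0,\ov x_n)=\ell$, forcing equality, so $\Gamma$ is geodesic.

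For the third bullet, fix $x\in\CS^{(0)}$ with image $\ov x$. Surjectivity of $\pi|_{B(x,2)}\to\ov B(\ov x,2)$ follows from the path-lifting of the second bullet: any $\ov y\in\ov B(\ov x,2)$ is joined to $\ov x$ by a path of length $\le 2$, which lifts to a path of the same length starting at $x$, landing in $B(x,2)$. It is distance-preserving because $\pi$ is $1$-Lipschitz and, conversely, any geodesic in $\CSN$ between images of points of $B(x,2)$ has length $\le 4$ and lifts (by the second bullet, after first connecting through $\ov x$ to control the lift) to a path between the given points of the same length — so distances in $B(x,2)$ are not larger than in $\ov B(\ov x,2)$ either. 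Injectivity is the crucial point: if $y,y'\in B(x,2)$ have $\pi(y)=\pi(y')$, then $y' = ny$ for some $n\in N$, and $\dist_{\CS}(y,y')\le \dist(y,x)+\dist(x,y')\le 4 < D$, so $n=1$ and $y=y'$. Hence $\pi|_{B(x,2)}$ is an isometry onto $\ov B(\ov x,2)$.

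For the fourth bullet, let $\ov X\subseteq \ov B(\ov z,2)$ for some vertex $\ov z$. Pick a lift $z$ of $\ov z$; by the third bullet $\pi$ restricts to an isometry $B(z,2)\to\ov B(\ov z,2)$, so $X:=(\pi|_{B(z,2)})^{-1}(\ov X)$ is a lift of $\ov X$, and its $N$-translates are lifts as well. Conversely, if $X'$ is any lift of $\ov X$, pick $z'\in X'$ mapping to $\ov z$ (wait — $\ov z$ may not be in $\ov X$; instead pick any vertex $\ov w\in\ov X$ and its lift $w'\in X'$, and note $\ov X\subseteq\ov B(\ov w,4)$, which is not quite a ball of radius $2$). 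To avoid this gap I would instead argue directly: fix a vertex $\ov w\in\ov X$ and a lift $w'\in X'$; there is a unique $n\in N$ with $nw'\in X$ (namely the one sending $w'$ to the chosen lift of $\ov w$ in $X$), and I claim $nX'=X$. Both $nX'$ and $X$ are lifts of $\ov X$ containing the same lift $w$ of $\ov w$; since $\ov X$ is connected (being a subgraph to which the statement applies — or, if $\ov X$ is disconnected, argue componentwise using that each component lies in a radius-$2$ ball around $w$... ) the path-lifting uniqueness from the second bullet, applied starting from $w$, shows every vertex of $nX'$ and of $X$ agrees, hence $nX'=X$. The main obstacle I anticipate is exactly this last uniqueness-of-lift-up-to-$N$ claim: one must be careful about whether $\ov X$ is assumed connected and about the radius of the ball needed to run the path-lifting argument cleanly; the honest fix is to observe that any subgraph inside a radius-$2$ ball has all of its vertices within distance $4$ of each other, so $D\ge 8$ is precisely what makes the "collision" argument ($n\ne 1$ forces displacement $<D$) go through for \emph{pairs of vertices of a lift}, not just for neighbours.
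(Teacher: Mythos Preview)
Your plan is essentially the paper's, and bullets 1, 2, and (once you land on the ``honest fix'') 4 match the paper's argument closely.

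The one place that needs repair is the distance-preserving step in bullet 3. You assert that a geodesic $\ov\Gamma$ from $\ov y$ to $\ov z$ (with $y,z\in B(x,2)$) ``lifts \dots\ to a path between the given points'' $y,z$. But the lift starting at $y$ ends at some $z'$ in the $N$-orbit of $z$, and neither your injectivity argument (which requires $z'\in B(x,2)$, whereas you only know $z'\in B(x,6)$) nor the ``connecting through $\ov x$'' idea forces $z'=z$: the triangle inequality only gives $\dist_{\CS}(z,z')\le \dist_{\CS}(z,y)+\ell(\ov\Gamma)\le 4+4=8$, which is not strictly below $D$ when $D=8$. The paper sidesteps this by arguing by contradiction: if $\dist_{\CS}(y,z)>\dist_{\CSN}(\ov y,\ov z)$ then, since $\dist_{\CS}(y,z)\le 4$, the geodesic $\ov\Gamma$ has length at most $3$; lifting it from $y$ yields $\dist_{\CS}(z,z')\le 4+3=7<8\le D$, forcing $z'=z$ and hence $\dist_{\CS}(y,z)\le 3$, a contradiction. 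This saving of one unit via the strict inequality is precisely what makes the threshold $D\ge 8$ (rather than $D\ge 9$) suffice.
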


\begin{proof}
    By definition, edges in $\CSN$ correspond to $N$-orbits of edges in $\CS$. First notice that, by the assumption on the minimum translation length, two vertices in the same $N$-orbit cannot be adjacent, hence $\CSN$ has no loop edges. Moreover, if there were two edges between two vertices $\ov  x,\ov  y$ of $\CSN$, we could find two edges $\{x,y\}$ and $\{x',y'\}$ between representatives of $\ov  x$ and $\ov  y$, respectively. We could further assume that $y=y'$, up to replacing $\{x',y'\}$ with one of its $N$-translates. But then $x$ and $x'$ would be $N$-translate within distance at most $2$, contradicting the bound on the minimum translation length.
        
    The quotient map is $1$-Lipschitz since the action of $N$ on $\CS$ is simplicial. In order to find a combinatorial path $\Gamma\subset \CS$ which projects to a given path $\ov\Gamma\subset \CSN$ it suffices to lift one edge at a time, given a lift of its starting point, and again this can be done since the action is simplicial. Notice that, by construction, $\Gamma$ has the same length as $\ov\Gamma$, hence if $\ov \Gamma$ is a geodesic then so is $\Gamma$, as otherwise we could find a shorter path $\Gamma'$ between the endpoints of $\Gamma$ which would project to a shorter path between the endpoints of $\ov \Gamma$.
    
     
    Now, let $x\in\CS^{(0)}$ and let $\ov  x$ be its projection. Combining the previous points we have that $\pi(\ov B(x,2))=\ov  B(\ov  x,2)$, so we are left to show that, for every $y,z\in \ov B(x,2)$, we have that $\dist_{\CSN}(\ov  y, \ov  z)=\dist_{\CS}(y,z)$, where $\ov  y$ and $\ov  z$ are the respective projections. If by contradiction this was not the case, then we could pick any geodesic $\ov  \Gamma$ between $\ov  y$ and $\ov  z$. Notice that, since $y,z\in \ov B(x,2)$ and $\pi$ is $1$-Lipschitz, we have that $\dist_{\CSN}(\ov  y, \ov  z)\le 4$, thus $\ov\Gamma$ must have length at most $3$. Now lift $\ov\Gamma$ to a geodesic path $\Gamma$ starting at $y$ and ending at some $z'$ in the same orbit of $z$. But then $z$ and $z'$ would be within distance at most $7$, contradicting the assumption on the minimum translation length.

    For the last statement, every subgraph $\ov  X$ which is contained in a closed $2$-ball admits a lift, by applying the local inverse of $\pi$. Given any two lifts $X,X'$, let $x\in X$ and $x'\in X'$ be two points with the same projection, so that there exists an element $n\in N$ such that $x'=nx$. Now pick another pair $y\in X$ and $y'\in X'$ with the same projection. Since $y'$ and $ny$ are within distance at most $8$, by the assumption on the minimum translation length we must have that $y'=ny$. Hence $X'=nX$, that is, all lifts belong to the same $N$-orbit.
\end{proof}

\section{Combinatorial rigidity}\label{sec:combrig}
\noindent The $\MCG$-action on $\CS$ naturally induces a simplicial action of $\MCG/N$ on $\CSN$. The goal of this Section is to show that, for most non-sporadic surfaces, the associated map $\MCG/N\to \Aut(\CSN)$ is an isomorphism, thus proving Theorem \ref{thmintro:comb} from the Introduction. For the sake of simplicity, we first state the result for surfaces of sufficiently large complexity, postponing low-complexity cases to Subsection \ref{subsec:ivanov_sporadic}.

\begin{thm}[Combinatorial rigidity]\label{thm:ivanov}
    Let $S$ be a surface of complexity at least $3$ which is not $S_{2,0}$, and let $\MCG/N$ be a large translation quotient. Then the map $\MCG/N\to \Aut(\CSN)$, induced by the natural action, is an isomorphism. 
\end{thm}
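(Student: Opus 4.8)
The plan is to prove surjectivity and injectivity of the homomorphism $\MCG/N\to\text{Aut}(\CSN)$ separately. For \emph{surjectivity}, I would fix $\ov\phi\in\text{Aut}(\CSN)$ and work with $\CS,\CSN$ as (flag) simplicial complexes. The structural input is that $\pi\colon\CS\to\CSN$ is a covering map with deck group $N$ (this is Lemma~\ref{lem:covering}): the minimum displacement bound guarantees that no $n\in N\setminus\{1\}$ fixes a vertex of $\CS$ or sends a vertex to an adjacent one, so $N$ acts freely and properly discontinuously on the geometric realisation, while $\pi$ restricts to an isomorphism on each simplex. Since $\zeta(S)\ge 3$ and $S\ne S_{2,0}$, Harer's theorem \cite{Harer:CS_wedge} gives that $\CS$ is simply connected, so $\pi$ is the universal cover. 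After fixing compatible basepoints, the lifting criterion of covering theory then produces a lift $\phi\colon\CS\to\CS$ of $\ov\phi\circ\pi$, i.e.\ with $\pi\circ\phi=\ov\phi\circ\pi$, and $\phi$ is simplicial since a continuous lift of a simplicial map along a simplicial covering carries vertices to vertices and simplices to simplices. Lifting $\ov\phi^{-1}$ as well to some $\psi$, the compositions $\psi\circ\phi$ and $\phi\circ\psi$ are lifts of $\pi$ itself, hence deck transformations, i.e.\ elements of $N$; in particular $\phi$ is a bijection, so $\phi\in\text{Aut}(\CS)$. Now I would invoke the Ivanov--Korkmaz theorem \cite{Ivanov:autC, Korkmaz} --- which applies precisely because $\zeta(S)\ge 3$ and $S\ne S_{2,0}$ --- to obtain a unique $g\in\MCG$ inducing $\phi$. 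Its image $\ov g\in\MCG/N$ then satisfies $\ov g\cdot\ov x=\ov{gx}=\pi(\phi(x))=\ov\phi(\ov x)$ for every curve $x$, so $\ov g$ induces $\ov\phi$.

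For \emph{injectivity} (i.e.\ that the kernel is trivial), suppose $\ov g\in\MCG/N$ acts trivially on $\CSN$ and fix a representative $g\in\MCG$, so that for each curve $x$ there is $n_x\in N$ with $gx=n_xx$. If $x,y$ are adjacent curves then $gx=n_xx$ and $gy=n_yy$ are adjacent, so (as $x$ is adjacent to both $y$ and $n_x^{-1}n_y\,y$) we get $\dist_{\CS}\!\bigl(y,\,n_x^{-1}n_y\,y\bigr)\le 2<D$, forcing $n_x=n_y$. Since $\CS$ is connected, a single $n\in N$ works for all curves, so $n^{-1}g$ fixes every curve; by faithfulness of the $\MCG$-action on $\CS$ --- again part of the Ivanov--Korkmaz isomorphism --- we conclude $n^{-1}g=1$, i.e.\ $g=n\in N$ and $\ov g=1$.

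The main obstacle is the surjectivity half, and specifically its two geometric inputs. First, one has to check that $\pi$ is genuinely a covering map of geometric realisations; this is exactly where the minimum displacement hypothesis is used, to rule out a nontrivial $n\in N$ stabilising or ``folding'' a simplex. Second, one needs the simple connectivity of $\CS$, which is where the precise hypotheses $\zeta(S)\ge 3$ and $S\ne S_{2,0}$ come from (and the same two conditions are what make Ivanov--Korkmaz an isomorphism rather than merely a surjection or one with a kernel). Once both are in hand, the remainder is formal covering-space bookkeeping together with classical combinatorial rigidity of the curve complex; the low-complexity surfaces where one or the other input fails are handled separately in Subsection~\ref{subsec:ivanov_sporadic}.
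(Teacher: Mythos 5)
Your proposal is correct and follows essentially the same approach as the paper: surjectivity via lifting $\ov\phi$ through the universal cover $\pi\colon\CS\to\CSN$ and applying Ivanov--Korkmaz, and injectivity via reducing to faithfulness of the $\MCG$-action on $\CS$. The only cosmetic difference is that for injectivity you make the "deck group equals $N$" step explicit by showing $x\mapsto n_x$ is locally (hence globally) constant using the displacement bound, whereas the paper simply cites that the $N$-action is properly discontinuous so the deck group is $N$; the two arguments have the same content.
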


\noindent For this Section only, it is convenient to see $\CS$ and $\CSN$ as simplicial \emph{complexes}, that is, we consider the flag simplicial complexes whose $1$-skeletons are the curve graph and its quotient, respectively. By a result of Harer \cite{Harer:CS_wedge}, $\CS$ is simply connected whenever $S$ has complexity at least $3$. Furthermore, the assumption on the large translation easily implies that $\CS$ is actually the universal cover of $\CSN$: 

\begin{lem}\label{lem:covering}
    The kernel $N$ acts properly discontinuously on $\CS$. As a consequence, the projection $\pi\colon \CS \to \CSN$ is a regular covering map, and the group of deck transformations is $N$.
\end{lem}

\begin{proof}
    For any point $p\in \CS$ let $x\in \CS^{(0)}$ be any vertex such that $p$ belongs to a simplex containing $x$, and let $U_p$ be the interior of the subcomplex spanned by $\ov B(x,2)$, which is an open neighbourhood of $p$. Then for every $n\in N-\{1\}$ we have that $U_p\cap n\cdot U_p=\emptyset$, as $\dist(x,nx)>4$. 
\end{proof}

\begin{proof}[Proof of Theorem \ref{thm:ivanov}]
    We first argue that the map $\MCG/N\to \Aut(\CSN)$ is surjective, i.e. that given an automorphism $\ov  \phi\in \Aut(\CSN)$ we can find an element $\ov  g\in \MCG/N$ inducing $\ov \phi$. Since a simplicial map between simplicial complexes is continuous, and since $\pi\colon \CS\to\CSN$ is the universal cover, there exists a continuous map $\phi\colon \CS\to \CS$ that makes the following diagram commute:
    $$\begin{tikzcd}
        \CS\ar{r}{\phi}\ar{d}{\pi}&\CS\ar{d}{\pi}\\
        \CSN\ar{r}{\ov \phi}&\CSN
    \end{tikzcd}$$
    It is easily seen that $\phi$ must map vertices to vertices and edges to edges, since $\pi$ is injective on every edge. Hence $\phi$ is a simplicial map, and actually an isomorphism as the same argument with $\ov \phi$ replaced by $\ov \phi^{-1}$ produces an inverse of $\phi$. In turn, by Ivanov's Theorem \cite{Ivanov:autC, Korkmaz} there is an element $g\in\MCG$ inducing $\phi$, and therefore its image $\ov g\in\MCG/N$ induces $\ov \phi$.

    
    Next we turn to injectivity: we pick an element $g\in \MCG$ which induces the identity on $\CSN$, and we claim that $g\in N$. To see this, notice that $\pi\circ g=g$, so $g$ acts as a deck transformation. Thus Lemma~\ref{lem:covering} yields an element $n\in N$ such that the composition $gn$ is the identity on $\CS$, and since $S$ has complexity at least $3$ and is not an $S_{2,0}$ we have that $g=n^{-1}$ (see e.g. the discussion in \cite[Section 3.4]{FarbMargalit}).
\end{proof}

\subsection{Some low-complexity cases}\label{subsec:ivanov_sporadic}
In this Subsection we establish an analogue of Theorem \ref{thm:ivanov} for some surfaces of low complexity. Firstly, if $S=S_{2,0}$, Ivanov's theorem states that the map $\MCG\to \Aut({\CS})$ is surjective, with kernel generated by the \emph{hyperelliptic involution} from Figure \ref{fig:hyp_involution} \cite{Ivanov:autC}. Hence arguing exactly as in the proof of Theorem \ref{thm:ivanov} we get:
\begin{prop}\label{thm:ivanov_S20}
    Let $S=S_{2,0}$, and let $\MCG/N$ be a large translation quotient. Then the map $\MCG/N\to \Aut(\CSN)$ is surjective and has finite kernel $\ov  K$, generated by the image of the hyperelliptic involution from Figure \ref{fig:hyp_involution}.
\end{prop}

\noindent Next, assume that $S$ is either $S_{0,4}$, $S_{1,0}$ or $S_{1,1}$. In this case the curve graph is defined differently: we connect two (isotopy classes of) curves if and only if they realise the minimal intersection number among all pairs of curves on the surface, which is $1$ for $S_{1,0}$ and $S_{1,1}$ and $2$ for $S_{0,4}$. The resulting simplicial complex is the Farey complex, which is a triangulation of the compactification of the hyperbolic plane and therefore is simply connected  (see e.g. \cite{Minsky_geom_approach} for a proof). Moreover, one can define a large translation quotient exactly as in Definition \ref{defn:lrq}, and prove the analogue of Lemma \ref{lem:covering} with the same arguments. Then again we can run the proof of Theorem \ref{thm:ivanov} using the version of Ivanov's theorem for surfaces of low complexity (see \cite{Korkmaz}), and we get the following:
\begin{prop}\label{thm:ivanov_sporadic}
    Let $S$ be either $S_{0,4}$, $S_{1,0}$, or $S_{1,1}$, and let $\MCG/N$ be a large translation quotient. Then the map $\MCG/N\to \Aut(\CSN)$ is surjective and has finite kernel $\ov  K$, generated by the images of the \emph{hyperelliptic involutions} in Figure \ref{fig:hyp_involution}.
\end{prop}

\begin{figure}[htp]
    \centering
    \includegraphics[width=0.8\textwidth]{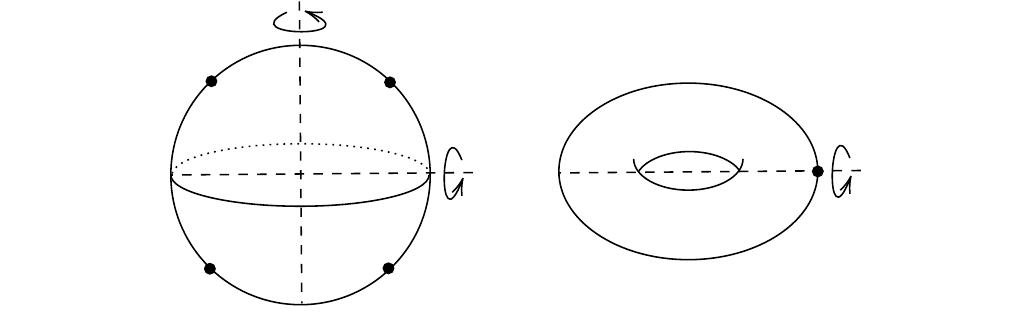}
    \includegraphics[width=0.8\textwidth]{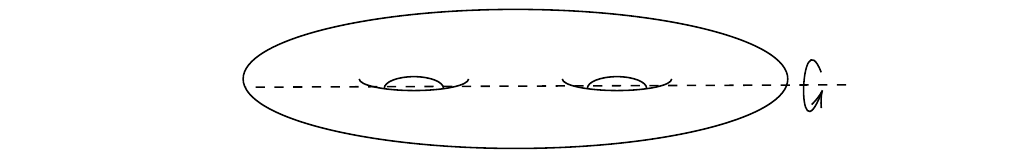}
    \caption{The dots represent the punctures. The hyperrelliptic involutions are the rotations by $\pi$ about the indicated axes, which fix all curves on the surfaces.}
    \label{fig:hyp_involution}
\end{figure}

\noindent Finally, leaving the case $S=S_{1,2}$ aside, which is more involved as the natural map $\mathcal{MCG}(S_{1,2})\to \Aut(\C S_{1,2})$ is not surjective \cite{Luo}, we are left to deal with the case $S=S_{0,5}$. What makes this surface different is that the curve complex $\C(S_{0,5})$ has dimension $1$ and is not a tree, and is therefore not simply connected. However, we can still run the proof of Theorem \ref{thm:ivanov} if we show that the polyhedral complex, obtained from $\C(S_{0,5})$ by gluing a cell to every $5$-cycle, is simply connected. The latter is unpublished work of Piotr Przytycki, whom I thank for explaining his proof to me.

First, by an \emph{arc} we mean the isotopy class, relative to the punctures, of a map $\gamma\colon [0,1]\to S_{0,5}$ mapping each endpoint of the interval to a puncture. For every surface $S$ with at least $2$ punctures, let $\mathcal{A}^2(S)$ be the simplicial complex whose vertices are all arcs $\gamma\colon [0,1]\to S_{0,5}$ whose endpoints are \emph{distinct} punctures, and where a collection of arcs span a simplex if their interiors are pairwise disjoint. 

\begin{prop}
    The complex $\mathcal{A}^2(S)$ is simply-connected.
\end{prop}

\noindent This fact is proven as \cite[Claim 3.17]{Schleimer}, but we spell out the details for clarity. A similar argument actually yields that $\mathcal{A}^2(S)$ is contractible, though we will not need this.

\begin{proof}
     Let $\gamma\colon \mathbb{S}^1\to \mathcal{A}^2(S)$ be a simplicial loop, and fix an arc $x_0\in\gamma$. If every other arc $y\in \gamma$ is disjoint from $x_0$, then we can connect $x_0$ to $y$, and this defines an extension of $\gamma$ to some triangulation of the disk $D^{2}$. 
    
    Thus, suppose that $\Sigma(\gamma)=\sum_{y\in\gamma} i(x_0,y)>0$, where $i(x_0,y)$ is the number of intersection points between two arcs in minimal position representing $x_0$ and $y$. Fix and endpoint $p_0$ of $x_0$, and let $y_0\in\gamma$ be the first arc one meets when travelling along $x_0$ starting from $p_0$. Let $t$ be the path along $x_0$ from $p_0$ to the first intersection with $y_0$, and let $z$ and $z'$ be the arcs obtained as the union of $t$ and one of the components of $y_0-t$. Since $y_0$ has distinct endpoints, one of its endpoints is not $p_0$, and therefore one of the new arcs, say $z$, has distinct endpoints. Notice moreover that $z$ is disjoint from $y_0$, and if $y_1\in \gamma$ was disjoint from $y_0$ then it is also disjoint from $z$, because we choose $y_0$ to have the closest intersection to $p_0$. Thus, we can replace $y_0$ with $z$ and obtain a loop $\gamma'$, which is homotopic to $\gamma$ but is such that $\Sigma(\gamma')<\Sigma(\gamma)$. Then we conclude by induction on $\Sigma(\gamma)$.
\end{proof}

\begin{defn}\label{defn:pentagon}
    Let $S=S_{0,5}$. By a \emph{pentagon} we mean a $5$-cycle $P\subset \CS$. Let $\CS_P$ be the polyhedral complex obtained from $\CS$ by gluing a $2$-cell to every pentagon.
\end{defn}

\begin{prop}[Przytycki]
    The complex $\CS_P$ is simply connected.
\end{prop}

\begin{proof}
    First, notice that $\CS$ can be seen as a non-complete subgraph of $\mathcal{A}^2(S)$, by mapping every curve $y$ to the unique arc $a(y)$ in the twice-punctured disk that $y$ cuts out on $S$. The map $a\colon \CS\to\mathcal{A}^2(S)$ is bijective at the level of vertices, but not at the level of edges, since in $\mathcal{A}^2(S)$ we allow two arcs to have the same endpoints. Notice moreover that, if the union of five arcs is a pentagon on $S$ (meaning that there is a cyclic ordering such that each arc intersects only the previous and the following, and only at a single endpoint), then the preimages under $a$ of these arcs form a pentagon, in the sense of Definition \ref{defn:pentagon}.
    
    Now let $\gamma\colon \mathbb{S}^1\to \CS_P$ be a simplicial loop, whose image lies in the $1$-skeleton $\CS_P^{(1)}=\CS$. The image $a(\gamma)\subset \mathcal{A}^2(S)$ can be filled with triangles, since $\mathcal{A}^2(S)$ is simply connected. Let $x_0,x_1,x_2\in \mathcal{A}^2(S)$ be the vertices of such a triangle $T$. If $x_i$ and $x_j$ have disjoint endpoints, then the corresponding curves $a^{-1}(x_i)$ and $a^{-1}(x_j)$ are disjoint, so that they are connected by an edge in $\CS$. Otherwise, there exists a unique arc $\varepsilon_{ij}\in\mathcal{A}^2(S)$ such that $a^{-1}(\varepsilon_{ij})$ is disjoint from both $a^{-1}(x_i)$ and $a^{-1}(x_j)$, as in Figure \ref{fig:disjoint_arcs} (it is the only arc with distinct endpoints in the complement of $x_i\cup x_j$). 

    \begin{figure}[htp]
        \centering
        \includegraphics[width=\textwidth]{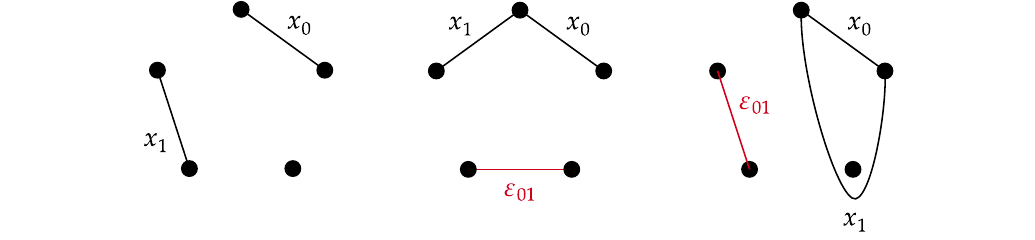}
        \caption{The possible configurations for two arcs $x_0$ and $x_1$, and the corresponding arc $\varepsilon_{01}$.}
        \label{fig:disjoint_arcs}
    \end{figure}

    By construction, the curves
    $a^{-1}\left(\{x_i\}_{i=0,1,2} \cup \{\varepsilon_{ij}\}_{0\le i<j\le 2}\right)$
    form a loop $\delta$ in $\CS$. Therefore we are left to prove that $\delta$ can be filled with finitely many pentagons.

    If there is a pair of arcs, say, $x_0$, $x_1$, which have distinct endpoints, then $x_2$ must share an endpoint with each of them. This shows that $\delta$ has precisely $5$ vertices, and therefore is filled by a pentagon in $\CS_P$. Otherwise, suppose that all pairs $x_i$, $x_j$ share an endpoint, that is, $\delta$ has six vertices. There are five possible cases, as in Figure \ref{fig:three_arcs}.

    \begin{figure}[htp]
        \centering
        \includegraphics[width=\textwidth]{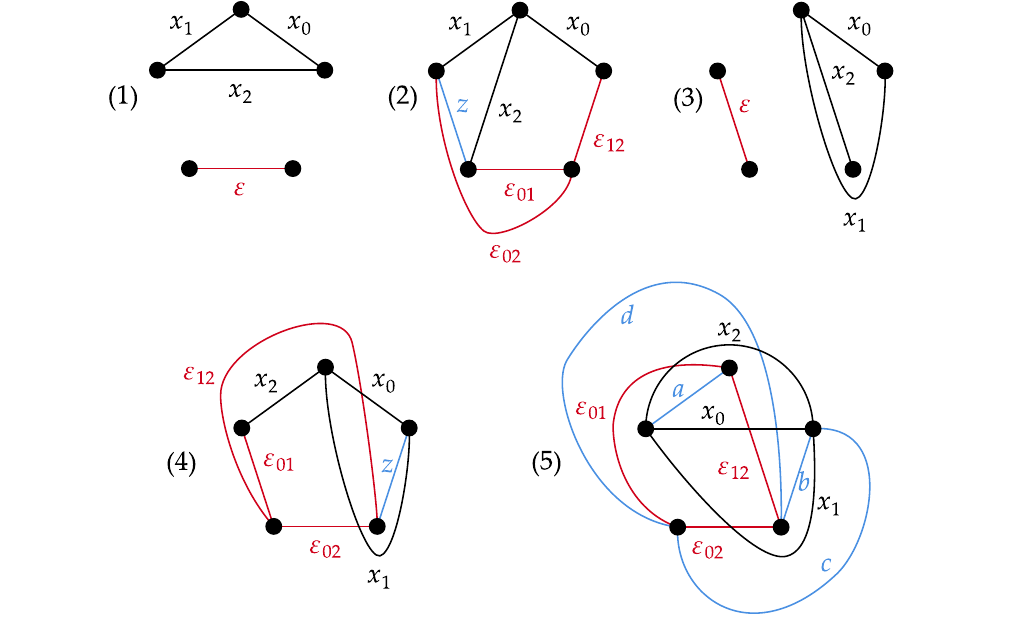}
        \caption{The possible configurations for three arcs $x_0,x_1,x_2$, and the corresponding arcs $\varepsilon_{ij}$.}
        \label{fig:three_arcs}
    \end{figure}

    Let us analyse the above cases separately.
    \begin{enumerate}[(1)]
    \item\label{a}  The $\varepsilon$-arcs coincide, thus $\delta=a^{-1}\left(\{x_i\}_{i=0,1,2} \cup \{\varepsilon\}\right)$ is a tripod.
    \item\label{b} Add the arc $z$ in the Figure. Now $\{x_0,x_1,z,\varepsilon_{01},\varepsilon_{12}\}$ form a pentagon, as well as $\{x_0,x_2,z,\varepsilon_{02},\varepsilon_{12}\}$. Therefore we can fill $\delta$ with the preimage under $a$ of these arcs, which is a union of two pentagons joined along two edges.
    \item The $\varepsilon$-arcs coincide, so we are in the same situation as case \ref{a}.
    \item Similarly to case \ref{b}, adding $z$ creates two pentagons.
    \item This time we need four auxiliary curves, in order to form four pentagons $P_1=\{x_0, a,\varepsilon_{01},\varepsilon_{12},b\}$, $P_2=\{x_2, a,\varepsilon_{12},\varepsilon_{02},c\}$, $P_3=\{x_1, a,\varepsilon_{12},d,c\}$ and $P_4=\{x_1, a,\varepsilon_{01},d,b\}$.\qedhere
    \end{enumerate}
\end{proof}

\begin{prop}\label{thm:ivanov_S5}
Let $S=S_{0,5}$, and let $\MCG/N$ be a large translation quotient. Then the map $\MCG/N\to \Aut(\CSN)$ is an isomorphism.
\end{prop}

\begin{proof}[Proof of Theorem \ref{thm:ivanov_S5}]
    Notice first that, as a consequence of Lemma \ref{lem:lift}, pentagons in $\CSN$ can be lifted, and pentagons in $\CS$ can be projected isometrically. Thus the quotient $\CS_P/N$ is again a polygonal complex. Moreover, any automorphism $\ov\phi\colon \CSN\to \CSN$ maps pentagons to pentagons, and therefore extends to an automorphism of $\CS_P/N$. We can now run the proof of Theorem \ref{thm:ivanov}, this time using the covering  $\CS_P\to \CS_P/N$, to show that the map $\MCG/N\to \Aut(\CSN)$ is an isomorphism (again, injectivity follows from \cite[Section 3.4]{FarbMargalit}). 
\end{proof}

\section{Quasi-isometries of hierarchically hyperbolic spaces}\label{sec:qusiiso_of_HHG}
\noindent For this Section only, we work in the general framework of \emph{hierarchically hyperbolic spaces and groups} (resp. HHS and HHG), first introduced by Behrstock, Hagen, and Sisto in \cite{HHSI}. We will show that, if a HHS $X$ satisfies some additional properties (see Assumptions \eqref{ass1} - \eqref{ass6} and Remark \ref{rem:asymph}), then any self-quasi-isometry of $X$ induces an automorphism of a certain graph, which encodes the intersection patterns of certain top-dimensional quasiflats. Once again, I am grateful to Jason Behrstock, Mark Hagen, and Alessandro Sisto for their contribution, in particular to the proof of Lemma \ref{lem:orth_go_to_orth_or_//}.

\subsection{Background on hierarchical hyperbolicity}\label{sec:hhs}
We first recall the intuition behind some notions from the world of hierarchical hyperbolicity, always keeping the motivating example of mapping class groups at hand. We refer to \cite{HHSII} for further details. 

\subsubsection{HHS and HHG} A \emph{hierarchically hyperbolic space} (HHS for short) is a metric space $X$ that comes with certain additional data, most importantly a family of uniformly hyperbolic spaces $\{\C Y\}_{Y\in \mathfrak S}$, called \emph{coordinate spaces}, and uniformly coarsely Lipschitz maps $\pi_Y:X\to \C Y$, which should be thought of as coordinate projections. For mapping class groups, these are curve graphs of subsurfaces and maps coming from subsurface projections. Moreover, the \emph{domain set} $\mathfrak S$, that is, the set indexing the family of coordinate spaces, has a partial ordering $\nest$, called \emph{nesting}, with a unique maximal element $S$, and a symmetric relation $\orth$, called \emph{orthogonality}. For mapping class groups, these are containment and disjointness of subsurfaces (up to isotopy). Nesting and orthogonality are mutually exclusive, meaning that if $U\nest V$ then $U\not\bot V$. When two domains $U,V$ of $\mathfrak S$ are not $\nest$- nor $\orth$-related, one says that they are \emph{transverse}, and one writes $U\transverse V$. Finally, whenever $U\transverse V$ there is a uniformly bounded subset $\rho^U_V\subseteq \C V$, which for mapping class group is again defined using subsurface projection.

An \emph{action} of a finitely generated group $G$ on a hierarchically hyperbolic space $(X,\frakS)$ is the data of:
\begin{itemize}
\item an action $G\circlearrowleft X$ by isometries;
\item an action $G\circlearrowleft \frakS$, preserving nesting and orthogonality;
\item for every $g\in G$ and every $Y\in \frakS$, an isometry $g_{Y}\colon \C Y\to \C g(Y)$.
\end{itemize}

Moreover, one requires that the two actions are compatible, meaning that for every $g\in G$ and every transverse domains $U,V\in \frakS$ the following diagrams commute:

$$\begin{tikzcd}
X\ar{r}{g}\ar{d}{\pi_U}&X\ar{d}{\pi_U}\\
\C U\ar{r}{g_U}&\C g(U)
\end{tikzcd}
\begin{tikzcd}
\C U\ar{r}{g_U}\ar{d}{\rho^U_V}&\C g(U)\ar{d}{\rho^{g(U)}_{g(V)}}\\
\C V\ar{r}{g_V}&\C g(V)
\end{tikzcd}$$

\noindent We often slightly abuse notation and drop the subscript for the isometry $g_U$. If the action on $X$ is metrically proper and cobounded, and the action on $\frakS$ is cofinite, then $G$ is a \emph{hierarchically hyperbolic group} (HHG for short), and any quasi-isometry between $G$ and $X$ given by the Milnor-\v{S}varc lemma endows $G$ with the HHS structure of $X$. 

\begin{rem}\label{rem:asymph}
    Hierarchically hyperbolic groups are examples of \emph{asymphoric} HHS, in the sense of \cite[Definition 1.14]{quasiflat}. This property is a requirement of many lemmas from \cite{quasiflat}, but we omit its definition as we shall never explicitly use it.
\end{rem}

\subsubsection{Product, flats, orthants}
The idea of orthogonality is that it corresponds to products, in the following sense. Given any $U\in \mathfrak S$, there is a corresponding space $F_U$ associated to it, which is quasi-isometrically embedded in $X$ ($F_U$ is a HHS itself with domain set $\mathfrak S_U=\{Y\in \mathfrak S\mid Y\nest U\}$, and in mapping class groups $F_U$ roughly corresponds to the mapping class groups of $U$). Given a maximal set $\{U_i\}$ of pairwise orthogonal elements of $\mathfrak S$, there is a corresponding \emph{standard product region} $P_{\{U_i\}}$ which is quasi-isometric to the product of the $F_{U_i}$ (think of a Dehn twist flat as, coarsely, a product of annular curve graphs).

Moreover, inside $X$ and the $F_U$s there are special uniform quasi-geodesics, called \emph{hierarchy paths}, which are those that project monotonically (with uniform constants) to all $\C Y$. Similarly, there are hierarchy rays and hierarchy lines, which are quasigeodesic rays and lines, respectively.  Given a pairwise orthogonal collection $\{U_i\}$, if for each $U_i$ we either choose a hierarchy line or a point, the product region $P_{\{U_i\}}$ contains a product of the given lines. This is what we will refer to as a \emph{standard $k$-flat}, where $k$ is the number of lines; we can analogously define \emph{standard $k$-orthants} as products of hierarchy rays. The \emph{support} of a standard $k$-flat (resp., orthant) is the set of $U_i$ for which a line (resp. ray) has been assigned.

\subsubsection{Complete support sets and hinges}
A \emph{complete support set} is a collection $\{U_i\}_{i=1}^\nu\subseteq \mathfrak S$ of pairwise orthogonal domains with all $\C U_i$ unbounded, and with maximal cardinality $\nu$ among sets with these properties. The quantity $\nu$ is called the \emph{rank} of $(X,\frakS)$, and if for every $U\in\frakS$ the Gromov boundary of $\C U$ is non-empty, then $\nu$ is the maximal dimension of standard $k$-orthants, as we now clarify.

A \emph{hinge} is a pair $\sigma =(U,p)$ where $U$ belongs to a complete support set and $p\in\partial \C U$. We say that $U$ is the \emph{support} of $\sigma$. As in \cite[Definition 5.3]{quasiflat}, one can associate to a hinge $\sigma$ a standard $1$-orthant, called its \emph{hinge ray} and denoted $\mathfrak h_\sigma$. This is a hierarchy ray whose projection to $\C U$ is a quasigeodesic ray asymptotic to $p$, and whose projections to all other coordinate spaces are uniformly bounded. Furthermore, \cite[Remark 5.4]{quasiflat} states that if $\sigma\neq\sigma'$ are two different hinges then $\dist_{Haus}(\mathfrak h_\sigma,\mathfrak h_{\sigma'})=\infty$.

Now, given a complete support set $\{U_i\}_{i=1}^\nu$ and a choice of a point $p_i\in \partial\C U_i$ for every $i$, the product of the $\mathfrak h_{(U_i,p_i)}$ is a standard $\nu$-orthant. Similarly, if we are given a pair of distinct points $p_i^{\pm}\in \partial\C U_i$ for every $i$, we can construct a standard $\nu$-flat, denoted by $\mathfrak F_{\{(U_i,p_i^{\pm})\}}$. We will refer to standard $\nu$-flats (resp. standard $\nu$-orthants) simply as \emph{standard flats} (resp. \emph{standard orthants}).

The next Lemma follows from carefully inspecting the proofs of \cite[Lemmas 4.11 and 4.12]{quasiflat}, and describes the coarse intersection of standard flats and orthants. Recall that, given subsets $A,B$ of a metric space $X$, the \emph{coarse intersection} $A\Tilde{\cap} B$, if well-defined, is a subspace of $X$ within bounded Hausdorff distance of all intersections between the $R$-neighbourhoods of $A$ and $B$, for $R$ sufficiently large. 

\begin{lem}[Coarse intersection of standard orthants and flats]\label{lem:coarse_int_orthants}
    Let $(X,\frakS)$ be an asymphoric HHS of rank $\nu$.
    \begin{itemize}
        \item Let $\mathcal O, \mathcal O'$ be standard orthants in $X$ with supports $\{U_i\}_{i=1}^\nu$ and $\{V_i\}_{i=1}^\nu$. Then $\mathcal O \Tilde{\cap} \mathcal O'$ is well-defined, and coarsely coincides with a standard $k$–orthant whose support is contained in $\{U_i\}_{i=1}^\nu\cap\{V_i\}_{i=1}^\nu$.
        \item Let $\mathcal F, \mathcal F'$ be standard orthants in $X$ with supports $\{U_i\}_{i=1}^\nu$ and $\{V_i\}_{i=1}^\nu$. Then $\mathcal F \Tilde{\cap} \mathcal F'$ is well-defined, and coarsely coincides with a product $\prod_{T} \ell_T$, where $T$ varies in $\{U_i\}_{i=1}^\nu\cap\{V_i\}_{i=1}^\nu$ and every $\ell_T$ is either a point, a standard $1$-orthant or a standard $1$-flat supported on $T$.
    \end{itemize}
\end{lem}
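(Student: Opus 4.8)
The plan is to reduce both statements to the corresponding facts about coarse intersections in the model product regions, using that standard orthants and flats live inside standard product regions $P_{\{U_i\}}$ which are quasi-isometric to genuine products $\prod_i F_{U_i}$. First I would recall the relevant content of \cite[Lemmas 4.11 and 4.12]{quasiflat}: a standard orthant $\mathcal O$ with support $\{U_i\}_{i=1}^\nu$ sits inside $P_{\{U_i\}}$ as a product $\prod_i r_i$, where each $r_i$ is a hierarchy ray in $F_{U_i}$ (since the support set is complete, there is one factor for each $U_i$). The key structural input is that if two product regions $P_{\{U_i\}}$ and $P_{\{V_i\}}$ have coarsely unbounded coarse intersection, then that intersection is itself coarsely a product region supported on the common domains $\{U_i\}\cap\{V_i\}$; this is the statement one extracts from inspecting those two lemmas, together with the fact that in an asymphoric HHS distinct complete support sets that share no domain give product regions that coarsely intersect in a bounded set.

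For the first bullet, I would argue as follows. If $\{U_i\}\cap\{V_i\}=\emptyset$ then the ambient product regions coarsely intersect in a bounded set, hence so do $\mathcal O$ and $\mathcal O'$, and the empty product (a point) is the degenerate $0$-orthant. Otherwise, write $T_1,\dots,T_k$ for the common domains. Inside $P_{\{U_i\}}$, $\mathcal O$ is a product of rays $r_i$ over the $U_i$; inside $P_{\{V_i\}}$, $\mathcal O'$ is a product of rays $s_j$ over the $V_j$. Projecting to each coordinate space $\C T_m$ for $m=1,\dots,k$, the coarse intersection is governed coordinate-by-coordinate: in the $\C T_m$ direction it is the coarse intersection of two hierarchy rays, which is either bounded or coarsely a sub-ray (again a hierarchy ray, by the monotonicity of hierarchy paths and thinness of the hyperbolic $\C T_m$); in all other coordinates (domains not shared, or shared domains on which the rays diverge) the intersection is bounded. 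Assembling these via the distance formula / the fact that $P_{\{T_m\}}$ is quasi-isometric to $\prod_m F_{T_m}$, one gets that $\mathcal O\Tilde\cap\mathcal O'$ is coarsely a standard $k'$-orthant with $k'\le k$ supported on a subset of $\{T_1,\dots,T_k\}\subseteq\{U_i\}\cap\{V_i\}$, and in particular it is well-defined because the intersections $N_R(\mathcal O)\cap N_R(\mathcal O')$ stabilise up to bounded Hausdorff distance once $R$ exceeds the HHS constants.

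For the second bullet the argument is identical, except that each factor $\ell_i$ of a standard flat $\mathcal F$ is a hierarchy \emph{line} rather than a ray, and the coarse intersection of two hierarchy lines in a hyperbolic space $\C T_m$ can be bounded, a sub-ray, or a full sub-line (the three possibilities listed), whereas a line intersected with a ray can only be bounded or a sub-ray; taking the product over $T\in\{U_i\}\cap\{V_i\}$ of these coordinate-wise outcomes gives exactly $\prod_T \ell_T$ with each $\ell_T$ a point, a $1$-orthant, or a $1$-flat. I expect the main obstacle to be the bookkeeping needed to show that these coordinate-wise coarse intersections genuinely \emph{assemble} to a product with uniformly bounded error — i.e.\ promoting the pointwise statements to a Hausdorff-distance statement for the full coarse intersection. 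This is precisely where asymphoricity and the structure of standard product regions from \cite{quasiflat} are used, so I would phrase the proof as extracting this assembly directly from the proofs of \cite[Lemmas 4.11 and 4.12]{quasiflat} rather than reproving it, and then just read off the support constraints and the trichotomy for the one-dimensional factors.
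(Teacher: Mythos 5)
Your plan matches the paper's: the paper gives no independent proof of this lemma and simply asserts that it ``follows from carefully inspecting the proofs of \cite[Lemmas 4.11 and 4.12]{quasiflat},'' which is exactly the stance you take when you say you would extract the assembly step from those proofs rather than reprove it. The coordinate-wise trichotomy you describe (bounded vs.\ sub-ray vs.\ sub-line in each shared $\C T$, with the product assembled via the structure of standard product regions and asymphoricity) is a faithful unpacking of what those two lemmas actually establish, so the proposal is consistent with, and essentially identical in method to, the paper's treatment.
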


\subsection{Quasi-isometries induce maps on hinges}\label{subsec:hinge}
Let $(X,\frakS)$ be an asymphoric HHS of rank $\nu$, and let $\bf{Hinge}(\frakS)$ be its set of hinges. We now prove that, under certain assumptions that we present as we go, every quasi-isometry of $X$ induces a map of a graph encoding the intersection patterns of standard flats. 

\begin{ass}\label{ass1}
    For every $U$ which belongs to a complete support set, the Gromov boundary $\partial \C  U$ of its coordinate space consists of at least $2$ points.
\end{ass}

\begin{defn}[Good domain]\label{def:good}
    A domain $U\in \frakS$ is \emph{good} if there exist two complete support sets whose intersection is $\{U\}$. Let $\frakS_G$ be the set of good domains, and let $\mathbf{Hinge}_G(\frakS)$ be the set of hinges supported on good domains.
\end{defn}

\begin{ass}\label{ass2}
    For every $U,V\in \frakS_G$ which are orthogonal, there exist two support sets whose intersection is $\{U,V\}$.
\end{ass}

\begin{ass}\label{ass3}
    For every $U\in \frakS$ which belongs to a complete support set, there exists a (possibly empty) collection of good domains $\{V_1,\ldots,V_n\}$ and two complete support sets whose intersection is $\{U,V_1,\ldots,V_n\}$. 
\end{ass}

\begin{rem}\label{rem:f_hin}
    Let $f\colon X \to X$ be a quasi-isometry. With the same arguments as in the first half of the proof of \cite[Theorem 5.7]{quasiflat}, one can define a map $f_{Hin}\colon \mathbf{Hinge}_G(\frakS)\to \mathbf{Hinge}(\frakS)$ by mapping $\sigma\in\mathbf{Hinge}_G(\frakS)$ to the unique hinge $\sigma'\in \mathbf{Hinge}(\frakS)$ such that $\dist_{\text{Haus}}(f(\mathfrak h_\sigma), \mathfrak h_{\sigma'})<+\infty $. Moreover, by inspection of the construction one sees that $\sigma'$ is itself good, since $\mathfrak h_{\sigma'}$ arises as the coarse intersection of two standard orthants (whose supports must therefore coincide only on the support of $\sigma'$ by Lemma \ref{lem:coarse_int_orthants}). Hence $f_{Hin}$ maps $\mathbf{Hinge}_G(\frakS)$ to itself; moreover, if $g$ is any quasi-inverse for $f$, then $f_{Hin}$ and $g_{Hin}$ are inverses, showing that $f_{Hin}$ is a bijection.
\end{rem}

\begin{defn}
    We say that two hinges are \emph{co-supported} if they are supported on the same domain, and  are \emph{orthogonal} if they are supported on orthogonal domains.
\end{defn}

\begin{lem}[Behrstock, Hagen, Sisto]\label{lem:orth_go_to_orth_or_//} 
    Under Assumptions \eqref{ass1} - \eqref{ass3}, if two good hinges $\sigma, \sigma'\in \mathbf{Hinge}_G(\frakS)$ are orthogonal then $f_{Hin}(\sigma)$ and $f_{Hin}(\sigma')$ are either orthogonal or co-supported.
\end{lem}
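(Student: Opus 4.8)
The plan is to argue by contradiction: suppose $\sigma = (U, p)$ and $\sigma' = (V, q)$ are orthogonal good hinges, but $\tau := f_{Hin}(\sigma)$ and $\tau' := f_{Hin}(\sigma')$ are transverse, i.e.\ supported on domains $U'$ and $V'$ that are neither orthogonal nor equal. The hinge rays $\mathfrak h_\sigma$ and $\mathfrak h_{\sigma'}$ live in a common product region, and since $U \orth V$ we can use Assumption \eqref{ass3} to find a complete support set $\{U, V_1, \dots, V_n\}$ (with the $V_i$ good) extending $U$, and similarly one containing $V$; the point is that the orthogonality of $\sigma, \sigma'$ lets us build a genuine $2$-dimensional standard flat, or at least a standard orthant, $\mathcal O$ of dimension $\geq 2$ containing both $\mathfrak h_\sigma$ and $\mathfrak h_{\sigma'}$ as "coordinate directions". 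Concretely I would take a complete support set containing both $U$ and $V$ (which exists: extend $\{U, V\}$, using that $U \orth V$, to a maximal pairwise-orthogonal family with unbounded coordinate spaces) and form the standard orthant whose supports include $U$ and $V$ with the rays in directions $p$ and $q$ respectively, and arbitrary basepoints elsewhere.

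Next I would track this flat through $f$. A quasi-isometry sends standard orthants to subsets coarsely equal to standard orthants of the same dimension (this is the quasiflats theorem machinery from \cite{quasiflat}, valid for asymphoric HHS); so $f(\mathcal O)$ coarsely coincides with a standard orthant $\mathcal O'$ of the same dimension, and by construction $\mathcal O'$ must coarsely contain both $f(\mathfrak h_\sigma)$ and $f(\mathfrak h_{\sigma'})$, hence (by the defining property of $f_{Hin}$ and the uniqueness of the asymptotic hinge ray) it coarsely contains $\mathfrak h_\tau$ and $\mathfrak h_{\tau'}$. Now apply Lemma \ref{lem:coarse_int_orthants}: the supports $U'$ and $V'$ of $\tau$ and $\tau'$ must both lie in the support set of $\mathcal O'$, which is a complete support set and hence consists of pairwise orthogonal domains. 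Therefore $U'$ and $V'$ are either orthogonal or equal — contradicting transversality. This forces $\tau$ and $\tau'$ to be orthogonal or co-supported, which is the claim.

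The step I expect to be the main obstacle is the first one: producing the common higher-dimensional standard flat/orthant containing both hinge rays in a controlled way. The subtlety is that two orthogonal good domains $U, V$ a priori only sit inside complete support sets separately, and Assumption \eqref{ass2} is exactly what guarantees we can find a single support set meeting another one in precisely $\{U, V\}$ — this is what pins down that the coarse intersection of the two relevant orthants is a standard flat supported exactly on $\{U, V\}$ (plus possibly point-directions), so that after applying $f$ and invoking Lemma \ref{lem:coarse_int_orthants} the surviving support is forced to contain both $U'$ and $V'$. Without Assumption \eqref{ass2} one might only conclude something about a larger intersection and lose the sharp conclusion that $U', V'$ sit in a common complete support set. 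Assembling the $f$-image via the coarse-intersection formula of Lemma \ref{lem:coarse_int_orthants}, and checking that the "point vs.\ $1$-orthant vs.\ $1$-flat" trichotomy there does not spoil the conclusion (a $1$-flat supported on $T$ still certifies $T$ is in the support set), is the routine-but-careful remainder.
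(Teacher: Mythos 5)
The central step of your argument contains a false claim. You assert that ``a quasi-isometry sends standard orthants to subsets coarsely equal to standard orthants of the same dimension,'' and conclude that $f(\mathcal O)$ coarsely coincides with a \emph{single} standard orthant $\mathcal O'$. This is not what the Quasiflats Theorem \cite[Theorem~A]{quasiflat} gives: the image of a top-dimensional quasiflat (and hence of a standard orthant or flat) is only within bounded Hausdorff distance of a \emph{finite union} of standard orthants. These constituent orthants can have pairwise distinct supports, so there is no single complete support set that you can invoke to force $U'$ and $V'$ to be orthogonal. Concretely, $f(\mathfrak h_\sigma)$ and $f(\mathfrak h_{\sigma'})$ may appear as boundary rays of \emph{different} orthants in this union, with other ``coordinate rays'' between them in the cyclic ordering of the boundary — and that is precisely the situation the paper's argument has to rule out.

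Your final paragraph partially corrects course by recognizing that Assumption~\eqref{ass2} produces two standard flats $\mathcal F_1,\mathcal F_2$ with $\mathcal F_1 \tilde\cap \mathcal F_2$ a $2$-flat on $\{U,V\}$, but the same problem persists: $f(\mathcal F_1)\tilde\cap f(\mathcal F_2)$ is again a union of $2$-orthants, not a single $2$-flat, so its support is not a single $2$-element set. The paper's proof deals with this by fixing the cyclic order on the boundary $1$-orthants of this union and analyzing what happens when there is an intervening coordinate ray $r$ between $f(\mathfrak h_\sigma)$ and $f(\mathfrak h_{\sigma'})$. If $r$ is supported on a good domain one gets a contradiction by pulling back to $\mathcal O$. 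But if $r$ is supported on a \emph{non-good} domain $V$, a genuinely different argument is needed: this is exactly where Assumption~\eqref{ass3} enters, producing two flats $\mathcal G_1,\mathcal G_2$ whose coarse intersection is supported on $\{V,V_1,\dots,V_n\}$, and then one argues via the fourfold coarse intersection $Y=\mathcal G_1\tilde\cap\mathcal G_2\tilde\cap f(\mathcal F_1)\tilde\cap f(\mathcal F_2)$ and its preimage that every $2$-orthant between the two image rays is supported inside $\{V,V_1,\dots,V_n\}$, so the supports of $f(\mathfrak h_\sigma)$ and $f(\mathfrak h_{\sigma'})$ both lie in that set and hence are orthogonal or equal. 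Your proposal neither reaches nor acknowledges this case; in fact you misstate Assumption~\eqref{ass3} as merely asserting the existence of a complete support set containing $U$, rather than two complete support sets whose intersection is $\{U\}$ together with good domains, which is the much stronger property the paper exploits. As written, the proposal has a real gap and would not close without essentially reproducing the paper's analysis of the intervening coordinate ray.
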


\begin{proof}
    Let $\sigma=(U,p)$ and $\sigma'=(U',p')$. By Assumption \eqref{ass2} there exist two  standard flats $\mathcal F_1,\mathcal F_2$ whose coarse intersection is a standard 2-flat containing $\mathfrak h_{\sigma}$ and $\mathfrak h_{\sigma'}$. Let $\mathcal O$ be the $2$-orthant spanned by $\mathfrak h_{\sigma}$ and $\mathfrak h_{\sigma'}$ in this standard 2-flat.
     
    As a consequence of the Quasiflats Theorem \cite[Theorem A]{quasiflat} and of Lemma \ref{lem:coarse_int_orthants}, $f(\mathcal F_1)\tilde{\cap}f(\mathcal F_2)$ is a union of standard 2-orthants, whose “boundary” 1-orthants, which we call \emph{coordinate rays}, can be ordered cyclically. Moreover, $f(\mathfrak h_{\sigma})$  must arise as some coordinate ray, since it coarsely coincides with some hinge $\mathfrak h_{f_{Hin}(\sigma)}$, and if a hinge is contained in a standard 2-orthant then it must be one of its boundary 1-orthants by \cite[Lemma 4.11]{quasiflat}. The same is true for $f(\mathfrak h_{\sigma'})$.
     
    Now, if $f(\mathfrak h_{\sigma})$ and $f(\mathfrak h_{\sigma'})$ are adjacent in the cyclic ordering, then they belong to a common 2-orthant, and therefore they are orthogonal. Thus suppose that there is a coordinate ray $r\in f(\mathcal O)$ between $f(\mathfrak h_{\sigma})$ and $f(\mathfrak h_{\sigma'})$, and let $V\in \frakS$ be its support. 
     
    If $V$ is good then we can proceed as in the proof of \cite[Theorem 5.7]{quasiflat} to get a contradiction. Indeed, in this case $f^{-1}(r)$ would coincide with some hinge ray $\mathfrak h_{\sigma''}$, which belongs to $\mathcal O$ but lies at infinite Hausdorff distance from both $\mathfrak h_{\sigma}$ and $\mathfrak h_{\sigma'}$, and this would contradict \cite[Lemma 4.11]{quasiflat}.
     
    Then we can assume that no ray between $f(\mathfrak h_{\sigma})$ and $f(\mathfrak h_{\sigma'})$ is supported on a good domain, and in particular that $V$ is not good. By Assumption \eqref{ass3} there exists a collection of good domains $\{V_1,\ldots,V_n\}$ and two complete support sets whose intersection is $\{V,V_1,\ldots,V_n\}$. Hence, there exist two standard flats $\mathcal G_1,\mathcal G_2$ whose intersection is a flat supported on $\{V,V_1,\ldots,V_n\}$ and containing $r$. Now consider $Y=\mathcal G_1\Tilde{\cap} \mathcal G_2\Tilde{\cap} f(\mathcal F_1)\Tilde{\cap} f(\mathcal F_2)$. This coarse intersection is well-defined by Lemma \ref{lem:coarse_int_orthants}, since both $f(\mathcal F_1)$ and $f(\mathcal F_2)$ are finite unions of standard orthants by the Quasiflat Theorem. Moreover, let $Z=f^{-1}(Y)$, which is a union of standard 1- and 2-orthants supported on $\{U,U'\}$ (again, this follows from combining the Quasiflat Theorem, applied to $f^{-1}(\mathcal G_1)$ and $f^{-1}(\mathcal G_2)$, and Lemma \ref{lem:coarse_int_orthants}). 
      
    Now, $Z$ contains the ray $f^{-1}(r)$, which lies inside $\mathcal O$ but cannot coarsely coincide with $\mathfrak h_{\sigma}$ nor with $\mathfrak h_{\sigma'}$, since their image lie at infinite Hausdorff distance. Therefore, $Z$ must contain the whole 2-orthant $\mathcal O$. In turn, this means that $Y$ must contain all 2-orthants of $f(\mathcal F_1)\tilde{\cap}f(\mathcal F_2)$ which lie between $f(\mathfrak h_{\sigma})$ and $f(\mathfrak h_{\sigma'})$ in the cyclic ordering, and moreover the support of every such 2-orthant must be contained in $\{V,V_1,\ldots,V_n\}$. Therefore, since $f(\mathfrak h_{\sigma})$ and $f(\mathfrak h_{\sigma'})$ are boundary 1-orthants of some 2-orthants of $Y$, their supports must belong to $\{V,V_1,\ldots,V_n\}$, and therefore they are either orthogonal or co-supported.
\end{proof}

\noindent For the next results we need the following additional assumptions:

\begin{ass}\label{ass4}
    If $U\in\frakS_G$ is a good domain, then $|\partial \C  U|=2$.
\end{ass}

\begin{ass}\label{ass5}
    For every two good domains $U\neq V\in\frakS_G$ there exists a good domain $W\in\frakS_G$ which is orthogonal to $U$ but not to $V$. In other words, each good domain is determined by the set of good domains which are orthogonal to it.
\end{ass}

\begin{lem}\label{lem://_go_to_orth_or_//}
    Under Assumptions \eqref{ass1} - \eqref{ass5}, if $\sigma, \sigma'\in \mathbf{Hinge}_G(\frakS)$ are co-supported then $f_{Hin}(\sigma)$ and $f_{Hin}(\sigma')$ are either orthogonal or co-supported.
\end{lem}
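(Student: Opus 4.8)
The plan is to work entirely at the combinatorial level of supports, using Lemma \ref{lem:orth_go_to_orth_or_//} as a black box — applied both to $f$ and to a quasi-inverse $g$ — together with Assumption \eqref{ass5}. Write $\sigma=(U,p)$ and $\sigma'=(U,p')$; we may assume $\sigma\neq\sigma'$ (otherwise there is nothing to prove), and then by Assumption \eqref{ass4} these are the \emph{only} two good hinges supported on $U$, a fact I will invoke several times. Put $V$ and $V'$ for the supports of $f_{Hin}(\sigma)$ and $f_{Hin}(\sigma')$; recall from Remark \ref{rem:f_hin} that these are good domains and that $f_{Hin}$ restricts to a bijection of $\mathbf{Hinge}_G(\frakS)$, with inverse $g_{Hin}$ induced by a quasi-inverse $g$ of $f$. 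If $V=V'$ then $f_{Hin}(\sigma)$ and $f_{Hin}(\sigma')$ are distinct hinges on a common domain, hence co-supported, and we are done; so the point is to show that $V\neq V'$ forces $V\orth V'$.

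Assume then $V\neq V'$ and apply Assumption \eqref{ass5} to the pair $(V,V')$, obtaining a good domain $W$ with $W\orth V$ and $W\not\orth V'$. Since $W\orth V$ we have $W\neq V$; and if $W=V'$ then $W\orth V$ reads $V'\orth V$ and we are done, so I may assume $W\neq V'$ and aim for a contradiction. Choose a hinge $\rho=(W,q)$ (possible as $\partial\C W$ is nonempty, $W$ being good) and set $\tau:=g_{Hin}(\rho)$, a good hinge supported on some good domain $T$; the role of $\tau$ is that its $f_{Hin}$-image is the fully controlled hinge $\rho$, on $W$.

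Now I would run two opposing deductions about how $\tau$ relates to $U$. On one side, $f_{Hin}(\sigma)$ and $f_{Hin}(\tau)=\rho$ are good hinges supported on the orthogonal domains $V$ and $W$, so Lemma \ref{lem:orth_go_to_orth_or_//} applied to $g$ gives that $\sigma$ and $\tau$ are orthogonal or co-supported, i.e. $T\orth U$ or $T=U$. On the other side I claim $\sigma'$ and $\tau$ are \emph{neither}: if they were co-supported then so would be $\sigma$ and $\tau$, forcing $T=U$ and hence $\tau\in\{\sigma,\sigma'\}$ by Assumption \eqref{ass4}, but then applying $f_{Hin}$ shows $\rho$ is supported on $V$ or on $V'$, contradicting $W\neq V$ and $W\neq V'$; and if $\sigma'$ and $\tau$ were orthogonal, then Lemma \ref{lem:orth_go_to_orth_or_//} applied to $f$ would make $f_{Hin}(\sigma')$ (on $V'$) and $\rho$ (on $W$) orthogonal or co-supported, i.e. $V'\orth W$ or $V'=W$, again contradicting our choice of $W$. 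Thus $T\not\orth U$ and $T\neq U$, clashing with the first deduction. This contradiction rules out the case $W\neq V'$, so $W=V'$, whence $V\orth V'$, as desired.

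The substantive work, and the likely source of slips, is purely the case-bookkeeping: tracking which of the domains $U,V,V',W,T$ are a priori allowed to coincide, and checking at each step that the hinges to which Lemma \ref{lem:orth_go_to_orth_or_//} is applied are genuinely distinct and genuinely orthogonal. All the geometric content — the Quasiflats Theorem, Lemma \ref{lem:coarse_int_orthants}, and the construction of $f_{Hin}$ — is already packaged into Lemma \ref{lem:orth_go_to_orth_or_//} and Remark \ref{rem:f_hin}, so no further coarse-geometry estimates should be needed.
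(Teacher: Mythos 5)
Your proof is correct and runs on the same engine as the paper's: pull a hinge on an orthogonal witness $W$ back through $g_{Hin}$, constrain it against both $\sigma$ and $\sigma'$ via two applications of Lemma \ref{lem:orth_go_to_orth_or_//} and Assumption \eqref{ass4}, and let Assumption \eqref{ass5} close the argument. The only difference is presentational — you phrase it as a contradiction starting from a single witness supplied by \eqref{ass5}, whereas the paper quantifies over all good hinges $\theta\orth f_{Hin}(\sigma)$ and invokes \eqref{ass5} at the end — but the underlying idea and the use of the hypotheses are identical.
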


\begin{proof}
    Let $\sigma, \sigma'\in \mathbf{Hinge}_G(\frakS)$ be two co-supported good hinges. For every good hinge $\theta$ which is orthogonal to $f_{Hin}(\sigma)$ we can consider its preimage $(f_{Hin})^{-1}(\theta)$. By Lemma \ref{lem:orth_go_to_orth_or_//}, which also applies to $(f_{Hin})^{-1}$ (again, because it coincides with the map $g_{Hin}$ for any quasi-inverse $g$ for $f$), we have that $(f_{Hin})^{-1}(\theta)$ and $\sigma$ are either orthogonal or co-supported. Moreover, by Assumption \eqref{ass4} the unique other hinge which has the same support as $\sigma$ is $\sigma'$, thus $(f_{Hin})^{-1}(\theta)$ is orthogonal to $\sigma$ and therefore also to $\sigma'$. Then again Lemma \ref{lem:orth_go_to_orth_or_//} tells us that $\theta$ and $f_{Hin}(\sigma')$ are either co-supported or orthogonal. 
    
    We can repeat the argument for every good hinge $\theta$ which is orthogonal to $f_{Hin}(\sigma)$. This yields that either $f_{Hin}(\sigma')$ has the same support of some $\theta$, and therefore is orthogonal to $f_{Hin}(\sigma)$; or $f_{Hin}(\sigma)$ and $f_{Hin}(\sigma')$ are orthogonal to the same hinges, and therefore they are co-supported by Assumption \eqref{ass5}.
\end{proof}

\noindent For the final result, we also need good domains to satisfy a strengthening of Definition \ref{def:good}, which roughly says that good hinges arise as intersection of “good” standard orthants.

\begin{defn}[Very good domain]
    A domain $U\in \frakS$ is \emph{very good} if there exist two complete support sets, \emph{made of good domains}, whose intersection is $\{U\}$.
\end{defn}
\begin{ass}\label{ass6}
    Good domains are very good.
\end{ass}
\begin{prop}\label{lem:automorphism_of_frakS_G}
    Under Assumptions \eqref{ass1} - \eqref{ass6}, $f_{Hin}$ induces an automorphism $f_G$ of the graph $(\frakS_G,\orth)$, whose vertex set is $\frakS_G$ and where adjacency corresponds to orthogonality.
\end{prop}

\begin{proof}
    We regard $\mathbf{Hinge}_G(\frakS)$ as a simplicial graph, by saying that two hinges are adjacent if and only if they are orthogonal or co-supported. Combining Remark \ref{rem:f_hin}, Lemma \ref{lem:orth_go_to_orth_or_//}, and Lemma \ref{lem://_go_to_orth_or_//}, we obtain that $f$ induces a simplicial automorphism of $\mathbf{Hinge}_G(\frakS)$, which we still denote by $f_{Hin}$ with a slight abuse of notation.
     
    Notice that any maximal clique of $\mathbf{Hinge}_G(\frakS)$ is the (infinite) subgraph spanned by all hinges supported on some complete support set, made of good domains. Moreover, if two hinges are co-supported then they belong to the same maximal cliques; conversely, if $\sigma$ and $\sigma'$ are supported on $U\neq V$, respectively, then in view of Assumption \eqref{ass6} we can find a complete support set, made of good domains, which contains $U$ but not $V$. This proves that being co-supported is equivalent to belonging to the same maximal cliques, and is therefore a purely combinatorial property, which must be preserved by the automorphism $f_{Hin}$. The same argument, applied to $(f_{Hin})^{-1}$, yields that two good hinges $\sigma, \sigma'\in \mathbf{Hinge}_G(\frakS)$ are co-supported if and only if $f_{Hin}(\sigma)$ and $f_{Hin}(\sigma')$ are co-supported. In turn, by how we defined adjacency in  $\mathbf{Hinge}_G(\frakS)$, we get that two good hinges $\sigma, \sigma'\in \mathbf{Hinge}_G(\frakS)$ are orthogonal if and only if $f_{Hin}(\sigma)$ and $f_{Hin}(\sigma')$ are orthogonal.
    

    Then $f_{Hin}$ induces an automorphism $f_G$ of $(\frakS_G, \orth)$ as follows. For every $U\in\frakS$ choose a hinge $\sigma$ supported on $U$, and set $f_G(U)$ as the support of $f_{Hin}(\sigma)$. This map is well-defined since $f_{Hin}$ maps co-supported hinges to co-supported hinges, and it preserves orthogonality since $f_{Hin}$ does.
\end{proof}

\noindent The last Lemma of this Subsection gives a quantitative bound on the Hausdorff distance between the image of a maximal flat, supported on good domains, and the maximal flat supported on the image of the supports. In the case of $\MCG/N$, this will tell us that the quasi-isometry permutes “Dehn twist flats”, up to uniformly bounded distance. 

\begin{lem}[Flats go to flats]\label{lem:flats_to_flats}
    Let $(X,\frakS)$ be an asymphoric HHS of rank $\nu$, satisfying Assumptions \eqref{ass1} - \eqref{ass6}. For every $L\ge 0$ there exists $C\ge0$ such that the following holds. Let $f\colon X\to X$ be a $(L,L)$-quasi-isometry. Let $\{U_i\}_{i=1}^\nu$ be a complete support set, made of good domains, and let $\mathcal F_{\{U_i\}}$ be the standard flat it supports. Similarly, let $\mathcal F_{\{f_G(U_i)\}}$ the standard flat supported on $\{f_G(U_i)\}_{i=1}^\nu$. Then $\dist_{Haus}\left(f\left(\mathcal F_{\{U_i\}}\right), \mathcal F_{\{f_G(U_i)\}}\right)\le C$.
\end{lem}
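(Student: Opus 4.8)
First I would reduce the flat to its corner orthants. Recall that $\mathcal F_{\{U_i\}}=\mathfrak F_{\{(U_i,p_i^{\pm})\}}$ is the product of $\nu$ hierarchy lines $\ell_i\subset F_{U_i}$, where $\ell_i$ has projection to $\C U_i$ a bi-infinite quasigeodesic with endpoints $p_i^{\pm}$; choosing a point on each $\ell_i$ splits it into the two hierarchy rays asymptotic to $p_i^{+}$ and to $p_i^{-}$, which are the hinge rays $\mathfrak h_{\sigma_i^{+}}$ and $\mathfrak h_{\sigma_i^{-}}$ with $\sigma_i^{\pm}=(U_i,p_i^{\pm})$. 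Thus $\mathcal F_{\{U_i\}}$ is, up to a uniformly bounded Hausdorff error, the union over $\varepsilon\in\{+,-\}^\nu$ of the standard $\nu$-orthants $\mathcal O_\varepsilon$ spanned by $\{\mathfrak h_{\sigma_i^{\varepsilon_i}}\}_{i=1}^\nu$, all sharing a common corner. Symmetrically, writing $\mathcal O'_\varepsilon$ for the standard $\nu$-orthant spanned by $\{\mathfrak h_{f_{Hin}(\sigma_i^{\varepsilon_i})}\}_{i=1}^\nu$, the set $\mathcal F_{\{f_G(U_i)\}}$ is, up to uniformly bounded Hausdorff error, the union of the $\mathcal O'_\varepsilon$. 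This description is legitimate: $\{f_G(U_i)\}$ is a complete support set of good domains, since $f_G$ is an automorphism of $(\frakS_G,\orth)$ by Lemma \ref{lem:automorphism_of_frakS_G}; and for each $i$ the hinges $f_{Hin}(\sigma_i^{+})\neq f_{Hin}(\sigma_i^{-})$ — distinct because $f_{Hin}$ is a bijection (Remark \ref{rem:f_hin}) — are supported on $f_G(U_i)$, which is good, hence carries exactly two hinges by Assumption \eqref{ass4}. So it suffices to bound $\dist_{Haus}(f(\mathcal O_\varepsilon),\mathcal O'_\varepsilon)$ uniformly in $L$ for each $\varepsilon$, and then take the maximum.

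Next I would control the image of a single corner orthant. Fix $\varepsilon$. The composition of a parametrization $[0,\infty)^\nu\to\mathcal O_\varepsilon$ with $f$ is a quasi-isometric embedding with constants depending only on $L$ and the HHS structure. Arguing as in the first half of the proof of \cite[Theorem 5.7]{quasiflat} — via the Quasiflats Theorem \cite[Theorem A]{quasiflat}, Lemma \ref{lem:coarse_int_orthants}, and \cite[Lemma 4.11]{quasiflat} — there are a constant $C_1=C_1(L)$ and a standard $\nu$-orthant $\mathcal Q$ with $\dist_{Haus}(f(\mathcal O_\varepsilon),\mathcal Q)\le C_1$. To identify $\mathcal Q$: for each $i$, the coordinate ray $\mathfrak h_{\sigma_i^{\varepsilon_i}}\subset\mathcal O_\varepsilon$ is carried by $f$ into the $C_1$-neighbourhood of $\mathcal Q$, while by Remark \ref{rem:f_hin} its image lies at finite Hausdorff distance from $\mathfrak h_{f_{Hin}(\sigma_i^{\varepsilon_i})}$; by \cite[Lemma 4.11]{quasiflat} it must therefore be close to a coordinate ray of $\mathcal Q$, and by the uniqueness of hinge rays \cite[Remark 5.4]{quasiflat} that coordinate ray is $\mathfrak h_{f_{Hin}(\sigma_i^{\varepsilon_i})}$. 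Since $\mathcal Q$ has exactly $\nu$ coordinate rays, $\mathcal Q=\mathcal O'_\varepsilon$, and hence $\dist_{Haus}(f(\mathcal O_\varepsilon),\mathcal O'_\varepsilon)\le C_1$.

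Finally I would reassemble. Since $f(\mathcal F_{\{U_i\}})$ and $\mathcal F_{\{f_G(U_i)\}}$ are, up to a uniform Hausdorff error $C_2$, the unions $\bigcup_\varepsilon f(\mathcal O_\varepsilon)$ and $\bigcup_\varepsilon\mathcal O'_\varepsilon$ over the same index set $\{+,-\}^\nu$, and since the Hausdorff distance between two such unions is at most the maximum of the Hausdorff distances between matching pieces, the previous paragraph gives $\dist_{Haus}(f(\mathcal F_{\{U_i\}}),\mathcal F_{\{f_G(U_i)\}})\le C_1+2C_2$. Setting $C:=C_1+2C_2$, which depends only on $L$, completes the proof.

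The main obstacle is the first assertion of the middle paragraph: that the image of a quasi-isometrically embedded copy of $[0,\infty)^\nu$ lies within Hausdorff distance depending only on the quasi-isometry constants of a \emph{single} standard $\nu$-orthant. The uniformity here — independent of $f$ and of $\varepsilon$ — is exactly the strength of the Quasiflats Theorem for asymphoric HHS. The delicate point is ruling out that $f(\mathcal O_\varepsilon)$ is only coarsely a union of several standard orthants glued along lower-rank faces; this is excluded because all $\nu$ of its coordinate rays $f(\mathfrak h_{\sigma_i^{\varepsilon_i}})$ are forced by \cite[Lemma 4.11]{quasiflat} to be close to coordinate rays of one common orthant. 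Everything else — the orthant decomposition of a standard flat, the fact that $f_G$ is a graph automorphism, and the count of hinges over a good domain — is bookkeeping already in place.
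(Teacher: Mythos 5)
The paper's own ``proof'' is a one-line deferral to \cite[Lemma 5.9]{quasiflat}, together with the observation that that argument only needs asymphoricity and the fact that $f_{Hin}$ preserves co-supportedness. Your proposal instead tries to spell the argument out from scratch by decomposing the flat into its $2^\nu$ corner orthants $\mathcal O_\varepsilon$ and treating each separately. The orthant decomposition of the flat, the identification of the hinge rays of $\mathcal F_{\{f_G(U_i)\}}$ via Assumption \eqref{ass4}, and the reassembly step are all sound bookkeeping.

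The gap is exactly the one you flag at the end, and your proposed fix does not close it. You claim that $f(\mathcal O_\varepsilon)$ lies within uniformly bounded Hausdorff distance of a \emph{single} standard $\nu$-orthant $\mathcal Q$. The Quasiflats Theorem (together with Lemma \ref{lem:coarse_int_orthants} and \cite[Lemma 4.11]{quasiflat}) only yields a finite \emph{union} of standard orthants, and nothing you invoke collapses that union to one piece. Lemma 4.11 of \cite{quasiflat} tells you that a hinge ray contained in a single standard orthant must be one of its coordinate rays; applied to the union $\bigcup_j\mathcal Q_j$, it only says that each $f(\mathfrak h_{\sigma_i^{\varepsilon_i}})$ eventually agrees with a coordinate ray of \emph{some} $\mathcal Q_j$, not that the same $\mathcal Q_j$ serves all $\nu$ rays; moreover, even if all $\nu$ rays were coordinate rays of one $\mathcal Q_{j_0}$, the union could still contain other orthants $\mathcal Q_j$ with $j\neq j_0$ that none of the $f(\mathfrak h_{\sigma_i^{\varepsilon_i}})$ touch, and these would break the claimed Hausdorff bound. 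The argument in \cite[Lemma 5.9]{quasiflat} avoids this by working with the whole flat $\mathcal F_{\{U_i\}}$ at once: a quasiflat image is coarsely a union of orthants whose \emph{entire} boundary skeleton must be accounted for by the $2\nu$ hinge rays $\mathfrak h_{f_{Hin}(\sigma_i^{\pm})}$, which pins the union down to $\mathcal F_{\{f_G(U_i)\}}$; a single corner orthant does not give you that control. So either reproduce the flat-level argument of \cite[Lemma 5.9]{quasiflat} or supply a genuine proof that the image of a standard $\nu$-orthant under an ambient quasi-isometry is coarsely a single standard $\nu$-orthant --- as written, that step is asserted, not proved.
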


\begin{proof}
    One can argue as in \cite[Lemma 5.9]{quasiflat}, whose proof only relies on asymphoricity of $X$ and the fact that $f_{Hin}$ preserves beign co-supported.
\end{proof}

\section{Quasi-isometric rigidity}\label{sec:qirigid}
\noindent Going back to mapping class groups, this Section is devoted to the proof of Theorem \ref{thmintro:qi} from the Introduction, that is quasi-isometric rigidity of those large translation quotients admitting a particular HHG structure (see Theorem \ref{thm:qirigid} for the exact statement). The idea is that, in this setting, the graph $(\frakS_G,\orth)$ corresponds to $\CSN$. Hence Lemma \ref{lem:automorphism_of_frakS_G} tells us that every quasi-isometry $f$ induces an automorphism of $\CSN$, which by the combinatorial rigidity results from Section \ref{sec:combrig} is induced by some element $\ov g\in\MCG/N$. With a little more effort, we will then show that $f$ coarsely coincides with the left multiplication by $\ov g$. 

\subsection{Surface-inherited HHG structures}
For the rest of the paper we shall only consider large translation quotient satisfying the following convention:
\begin{conv}[Surface-inherited HHG structure]\label{conv:surface-inherited} Let $S$ be a surface of complexity at least $2$ which is not a torus with two punctures. Let $\MCG/N$ be a large translation quotient, endowed with a HHG structure with the following properties:
\begin{itemize}
    \item The domain set $\ov \frakS=\frakS/N$ is the collection of $N$-orbits of essential subsurfaces.
   
    \item Two domains $\ov  U,\ov  V\in\ov \frakS$ are nested (resp. orthogonal) if and only if they admit representatives $U,V\in\frakS$ which are nested (resp. orthogonal).
    \item The top-level coordinate space $\C\ov S$ is $\CSN$. In particular, $\CSN$ is hyperbolic.
   
    \item Whenever $\ov  U\in\ov \frakS-\{S\}$, the associated coordinate space is $\left(\bigcup_{U\in \ov U} \C U\right)/N$, and the quotient projection $\bigcup_{U\in \ov U} \C U\to \C\ov U$ restricts to an isometry $\C U\to\C\ov U$, for any representative $U\in \ov U$. Here $\C U$ is the usual curve graph of $U$, unless $U$ has complexity $1$ and $\C U$ is isomorphic to the Farey complex, or $U$ is an annulus and $\C U$ is its annular curve graph (as defined in \cite[Section 2.4]{MM_2}).
\end{itemize}
\end{conv}
\noindent The assumption on the topological type of $S$, together with the combinatorial rigidity results from Section \ref{sec:combrig}, will ensure that every automorphism of $\CSN$ is induced by a mapping class. Moreover, we require the complexity to be at least $2$, so that in $\MCG/N$ we can find “Dehn Twist flats”, that is, standard flats supported on the orbits of pairwise disjoint annuli, of dimension at least $2$.

We devote the rest of the Subsection to showing that, if a large translation quotient satisfies Convention \ref{conv:surface-inherited}, then it fits the framework of Section \ref{subsec:hinge}. We first point out a few easy consequences of the Convention.
\begin{lem}\label{lem:css_lift}
    Every finite collection $\ov  U_1,\ldots, \ov  U_k\in \ov \frakS$ of pairwise orthogonal elements admits representatives $U_1,\ldots, U_k\in \frakS$ which are pairwise orthogonal.
\end{lem}

\begin{proof}
    The proof is by induction on $k$, the base case $k=2$ being true by Convention \ref{conv:surface-inherited}. Thus let $k>2$ and let $U_1,U_2$ be disjoint representatives for $\ov  U_1,\ov  U_2$. Moreover, let $U_3\in \ov  U_3$ be such that $U_3\orth U_1$ (such a representative exists since we know that $\ov  U_1,\ov  U_3$ admit disjoint representatives, and up to the $N$-action we can assume that the representative for $\ov  U_1$ is $U_1$). Similarly, there exists $n\in N$ such that $nU_3\orth U_2$. 

    Now fix a curve $x\in U_1$. First notice that every curve $y$ lying on $U_3$ is disjoint from $x$. Moreover, $n y$ is disjoint from all curves on $U_2$, which are in turn disjoint from $x$. Hence $\dist_{\CS}(y,n y)\le 3$, and by the assumption on the minimum translation length we must have that $n=1$. Thus $U_3$ is disjoint from both $U_1$ and $U_2$. If we repeat the procedure we can find representatives $U_i\in \ov  U_i$ for every $i=3,\ldots, k$ which are disjoint from both $U_1$ and $ U_2$. We can now replace $U_1$ and $U_2$ with the subsurface $U_1\sqcup U_2$, so that the conclusion follows by induction.
\end{proof}

\begin{lem}\label{lem:CSN_unbounded}
    $\CSN$ is unbounded.
\end{lem}

\begin{proof}
    Let $U,V$ be two annuli whose core curves $u,v$ are at distance $3$ in $\CS$. Notice that, by the assumption on the minimum translation length, for every $n\in N-\{1\}$, we have that $\dist_{CS}(u,nv)\ge 6$. This proves that any two translates of $U$ and $V$ fill the surface, and in particular $\ov U \transverse \ov V$. Furthermore, since both $U$ and $V$ are not the whole surface, $\C \ov U\cong \C U$ is unbounded, and similarly for $\C \ov V$.

    Now, by \cite[Theorem 3.2]{Petyt_Spriano_Unbounded} there exists a maximal family $\{\ov T_1,\ldots, \ov T_k\}\subset \ov\frakS$ of pairwise orthogonal domains with unbounded coordinate spaces, such that any other domain $R\in \ov\frakS$ with unbounded coordinate space is nested in one of the $\ov T_i$. Since $\ov U \transverse \ov V$, they must be nested in the same $\ov T_i$, call it $\ov T$. In particular, there exist representatives $T\in \ov T$, $U'\in \ov U$ and $V'\in \ov V$ such that $U',V'\nest T$. But $U'$ and $V'$ fill the surface, so $T$ must be the whole $S$. This shows that $\C \ov S=\CSN$ is unbounded, as required.\end{proof}

\noindent We are now ready to check the assumptions from Subsection \ref{subsec:hinge}:

\begin{prop}\label{lem:ass_satisfied}
    Let $\MCG/N$ satisfy Convention \ref{conv:surface-inherited}. Then good domains correspond to $N$-orbits of annuli, and Assumptions \eqref{ass1} - \eqref{ass6} hold.
\end{prop}

\begin{proof}
    We first recall some properties of the HHG structure for mapping class groups (see e.g. \cite[Theorem 11.1]{HHSII}). Firstly, if a subsurface $U$ belongs to a complete support set, then it is either an annulus, a $S_{0,4}$ or a $S_{1,1}$. Moreover, if $U$ has complexity one then every complete support set containing $U$ must also contain all its boundary annuli, and in particular $U$ is not good; on the other hand, annuli are very good, since every annulus can be obtained as the intersection of two \emph{pants decompositions}, that is, two maximal collections of pairwise disjoint annuli. Finally, the rank of $\MCG$ is precisely the complexity of $S$.

    Now let $\MCG/N$ satisfy the Convention. We subdivide the rest of the proof into a series of claims:
\begin{claim}\label{claim:orbit_of_css}
    The $N$-orbit of a complete support set for $\MCG$ is a complete support set for $\MCG/N$, and every complete support set for $\MCG/N$ is of this form. In particular, the rank of $\MCG/N$ is $\zeta(S)$.
\end{claim}
\begin{proof}[Proof of Claim \ref{claim:orbit_of_css}]
    Let $\{U_i\}_{i=1}^{\zeta(S)}$ be a complete support set for $\MCG$. Since $\zeta(S)\ge 2$, the set of all curves lying on $\bigcup_{i=1}^{\zeta(S)} U_i$ has diameter at most $2$ inside $\CS$, hence the subsurfaces must belong to pairwise distinct orbits $\{\ov  U_i\}_{i=1}^{\zeta(S)}$ by the assumption on the minimum translation length. Moreover, by Convention \ref{conv:surface-inherited} the orbits $\{\ov  U_i\}_{i=1}^{\zeta(S)}$ are again pairwise orthogonal, and have unbounded coordinate spaces since each $\C \ov  U_i$ is isometric to $\C U_i$. Hence the projection of a complete support set for $\MCG$ is a support set for $\MCG/N$ with the same number of domains, and in particular the rank $\nu$ of $\MCG/N$ is at least ${\zeta(S)}$.
     
    Conversely, given a complete support set $\mathcal T=\{\ov U_1,\ldots,\ov U_{\nu}\}\subset \ov\frakS$ for $\MCG/N$, by Lemma \ref{lem:css_lift} we can find pairwise disjoint representatives $U_i\in \ov  U_i$, proving that $\nu$ is at most $\zeta(S)$; hence $\nu=\zeta(S)$, and every complete support set for $\MCG/N$ lifts to a complete support set for $\MCG$.
\end{proof}

\begin{claim}\label{claim:good_dom_for_mcg/n}
    Let $\ov  U\in\ov \frakS$ be a domain belonging to a complete support set. Then $\ov U$ is good if and only if it is the orbit of an annulus.
\end{claim}

\begin{proof}[Proof of Claim \ref{claim:good_dom_for_mcg/n}]
If $\ov  U$ is the orbit of an annulus $U$, we can find two pants decompositions $\mathcal U$ and $\mathcal V$  whose intersection is $\{U\}$. Since all curves in $\mathcal U\cup\mathcal V$ lie in the ball of radius $1$ centred at $U$ inside $\CS$, the assumption on the minimum translation length implies that any two domains in $\mathcal U\cup\mathcal V$ lie in different $N$-orbits. Hence $\ov {\mathcal U}\cap\ov {\mathcal V}=\{\ov U\}$. Conversely, if $\ov  U$ is the class of a subsurface $U$ of complexity one, then every complete support set $\ov {\mathcal U}$ containing $\ov  U$ lifts to a complete support set containing $U$, which must also contain the boundary annuli of $U$. Hence $\ov {\mathcal U}$ must contain the $N$-orbit of the boundary annuli of $U$.
\end{proof}
 
    \noindent We finally check Assumptions \eqref{ass1} - \eqref{ass6}:
     
    \textbf{Assumption \eqref{ass1}:} Whenever $\ov  U$ belongs to a complete support set, $\C\ov  U$ is isometric to $\C U$ for any of its representatives $U$, and in particular has at least two points at infinity.
     
    \textbf{Assumption \eqref{ass2}:} For every two $\ov  U$, $\ov  V$ which lift to disjoint annuli $U,V$, we can find two pants decompositions $\mathcal U$ and $\mathcal V$ whose intersection is $\{U,V\}$. Moreover, any two annuli in $\mathcal U\cup\mathcal V$ belong to different $N$-orbits, because their core curves are disjoint, so $\{\ov  U,\ov  V\}$ is the intersection of $\ov {\mathcal U}$ and $\ov {\mathcal V}$.
     
    \textbf{Assumption \eqref{ass3}:} Let $\ov  U$ belong to a complete support set. If $\ov  U$ is not good, we can pick one of its lift $U$ and find two complete support sets $\mathcal U$ and $\mathcal V$ whose intersection is $\{U,V_1,\ldots,V_n\}$, where $\{V_1,\ldots,V_n\}$ are the boundary annuli of $U$. Now, all curves in $\mathcal U\cup\mathcal V$ lie in the ball of radius $1$ centred at any curve $x$ which lies in $U$. Therefore, by the assumption on the minimum translation length, any two domains in $\mathcal U\cup\mathcal V$ must belong to different $N$-orbits, and in turn this means that $\{\ov  U,\ov  V_1,\ldots,\ov  V_n\}$ is the intersection of $\ov {\mathcal U}$ and $\ov {\mathcal V}$.
     
    \textbf{Assumption \eqref{ass4}:} If $\ov  U$ is good then $\C \ov U$ is isometric to an annular curve graph, hence  $|\partial\C\ov  U|=2$.
     
    \textbf{Assumption \eqref{ass5}:} It is enough to show that, if $\ov x,\ov y\in\CSN^{(0)}$ have the same \emph{link} inside $\CSN$ (i.e. if they are adjacent to the same vertices), then they coincide. Indeed, the subgraph spanned by $\{\ov x,\ov y\}\cup\link_{\CSN}(\ov x)$ has diameter at most $2$, and therefore can be lifted by Lemma \ref{lem://_go_to_orth_or_//}. Thus we get two lifts $x,y$ such that $\link_{\CS}(x)$ is the lift of $\link_{\CSN}(\ov x)$, and therefore coincides with $\link_{\CS}(y)$. Hence we are left to prove that, if two curves $x,y$ have the same link in $\CS$, then they must coincide. This is clearly true if the curves are disjoint, as then $x\in\link(y)-\link(x)$. Otherwise, let $S'$ be a component of $S-{x}$ which is not a pair of pants (here we are using that $S$ has complexity at least $2$), and let $z$ be the subsurface projection of $y$ inside $S'$. By applying a partial pseudo-Anosov of $S'$ to $z$ we can find a curve $z'$ inside $S$ which crosses $z$, and therefore also $y$.
     
    \textbf{Assumption \eqref{ass6}:} As we saw before, every $\ov U$ which is the orbit of an annulus arises as the intersection of two complete support sets which come from two pants decompositions. Therefore good domains are very good.
\end{proof}

\noindent As a consequence of Proposition \ref{lem:automorphism_of_frakS_G}, combined with the description of good domains from Proposition \ref{lem:ass_satisfied}, we get:
\begin{cor}\label{cor:auto_CSN}
    Let $\MCG/N$ satisfy Convention \ref{conv:surface-inherited}. Every quasi-isometry $f\colon \mathcal{MCG}/N\mapsto \mathcal{MCG}/N$ induces an automorphism $f_{\CSN}$ of $\CSN$.
\end{cor}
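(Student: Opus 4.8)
The plan is to combine Lemma~\ref{lem:ass_satisfied} with Lemma~\ref{lem:automorphism_of_frakS_G}, and then to translate the resulting automorphism of the graph $(\frakS_G,\orth)$ into an automorphism of $\CSN$. First I would invoke Lemma~\ref{lem:ass_satisfied}: since $S$ has complexity at least $2$ and $\MCG/N$ satisfies Convention~\ref{conv:surface-inherited}, Assumptions~\eqref{ass1}--\eqref{ass6} all hold, and moreover the good domains $\frakS_G$ are precisely the $N$-orbits of annuli. Therefore Lemma~\ref{lem:automorphism_of_frakS_G} applies, and any quasi-isometry $f:\,\MCG/N\to\MCG/N$ (which is in particular a self-quasi-isometry of the HHS $(\MCG/N,\mathfrak T)$ by Convention~\ref{conv:surface-inherited}, since $\MCG/N$ is a HHG and hence an asymphoric HHS) induces an automorphism $f_G$ of the graph $(\frakS_G,\orth)$.

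Next I would identify this graph with $\CSN$. An $N$-orbit of annuli $\ov U$ is the regular-neighbourhood orbit of an $N$-orbit of curves, so there is a natural bijection between $\frakS_G$ and the vertex set $\CSN^{(0)}$, sending $\ov U$ to the orbit $\ov x$ of its core. Under this bijection, orthogonality $\ov U\orth \ov V$ in $\ov\frakS$ means exactly that there exist disjoint representatives, i.e. that the cores have disjoint representatives, which by definition of $\CSN$ as a quotient simplicial graph is exactly adjacency of $\ov x$ and $\ov y$ in $\CSN$ (here one uses that $D\ge 8$ so that, by the Lemmas of Subsection~\ref{subsec:projections}, $\CSN$ is a genuine simplicial graph with no multi-edges, and that distinct orbits of disjoint curves are distinct). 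Hence $(\frakS_G,\orth)$ and $\CSN$ are isomorphic as simplicial graphs, and $f_G$ transports to an automorphism $f_{\CSN}$ of $\CSN$, which is what the corollary asserts.

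I do not expect a serious obstacle here: the corollary is essentially a dictionary entry, converting the abstract output of Lemma~\ref{lem:automorphism_of_frakS_G} into concrete surface-theoretic language. The only point requiring a word of care is that the quasi-isometry $f$ is assumed to be a map of the group $\MCG/N$ with its word metric, whereas Lemma~\ref{lem:automorphism_of_frakS_G} is stated for a self-quasi-isometry of the HHS $X$ underlying the HHG structure; these are identified via the Milnor--\v{S}varc quasi-isometry furnished by Convention~\ref{conv:surface-inherited}, so composing $f$ with that quasi-isometry and its quasi-inverse produces a self-quasi-isometry of $X$ to which the lemma applies, and the induced graph automorphism does not depend on these auxiliary identifications up to the identification of $X$ with $\MCG/N$. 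With this understood, the proof is just: apply Lemma~\ref{lem:ass_satisfied}, apply Lemma~\ref{lem:automorphism_of_frakS_G}, and observe that $(\frakS_G,\orth)\cong\CSN$ via cores of annuli.
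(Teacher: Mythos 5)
Your proposal is correct and follows the paper's own argument: the paper derives the corollary in exactly this way, as a direct consequence of Lemma~\ref{lem:ass_satisfied} (which gives the Assumptions and the identification of good domains with $N$-orbits of annuli) followed by Lemma~\ref{lem:automorphism_of_frakS_G}, together with the observation that $(\frakS_G,\orth)$ is isomorphic to $\CSN$ via cores. Your extra paragraph on the Milnor--\v{S}varc identification is a helpful clarification of a point the paper leaves implicit, but it does not change the route.
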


\subsection{Quasi-isometric rigidity of large translation quotient}\label{subsec:qi_for_mcgn}
Recall that two groups $G$ and $H$ are \emph{weakly commensurable} if there exist two finite-index subgroups $G'\le G$ and $H'\le H$ and two finite normal subgroups $L\unlhd G'$ and $M\unlhd H'$ such that $G'/L\cong H'/M$.
\begin{thm}[Quasi-isometric rigidity]\label{thm:qirigid}
   Let $\MCG/N$ satisfy Convention \ref{conv:surface-inherited}. If a finitely generated group $G$ and $\MCG/N$ are quasi-isometric then they are weakly commensurable. 
\end{thm}
\noindent We shall derive Theorem \ref{thm:qirigid} from the following quantitative version of quasi-isometric rigidity:
\begin{prop}[{\cite[Lemma 8.22]{mangioni2023rigidity}, see also \cite[Section 10.4]{Schwa}}]\label{prop:qi_quantitativa}
    Let $G$ be a finitely generated group, equipped with a fixed word metric induced by a finite generating set. Suppose that, for every $L\ge0$, there exists $R\ge 0$ such that:
    \begin{enumerate}[(a)]
    \item\label{item:1} Every $(L,L)$-self-quasi-isometry of $G$ lies within distance $R$ from the left multiplication by some element of $G$; 
    \item\label{item:2} If a $(L,L)$-self-quasi-isometry of $G$ lies within finite distance from the identity, then it lies within distance $R$ from the identity.
\end{enumerate}
Then a finitely generated group $H$ which is quasi-isometric to $G$ is also weakly commensurable to $G$.
\end{prop}

\begin{proof}[Proof of Theorem \ref{thm:qirigid}] Fix a word metric on $\MCG$ with respect to a finite generating set, which induces a word metric on $\MCG/N$. We now check the requirements of Proposition \ref{prop:qi_quantitativa}.
     
    \textbf{\ref{item:1}} Let $f\colon \MCG/N\to \MCG/N$ be a $(L,L)$-quasi-isometry, and let $f_{\CSN}$ be the induced automorphism of $\CSN$. By the combinatorial rigidity results from Section \ref{sec:combrig}, $f_{\CSN}$ is induced by some element $\ov  g\in\MCG/N$. Moreover, by Lemma \ref{lem:flats_to_flats} there exist a constant $C$ such that, whenever $\{\ov  U_i\}_{i=1}^\nu$ is a maximal collection of pairwise orthogonal orbits of annuli, $f$ maps the standard flat $\mathcal F_{\{\ov  U_i\}}$ within Hausdorff distance at most $C$ from $\mathcal F_{\{f_{\CSN}(\ov  U_i)\}}=\mathcal F_{\{\ov  g(\ov  U_i)\}}$, which in turn is within Hausdorff distance at most $C$ from $\ov  g\left(\mathcal F_{\{\ov  U_i\}}\right)$. Therefore $\dist_{Haus}\left(f(\mathcal F_{\{\ov  U_i\}}),\ov  g(\mathcal F_{\{\ov  U_i\}})\right)\le 2C$.
     
    Now, to show that $f$ and $\ov  g$ uniformly coarsely coincide it is enough to say that, for every point $x\in\MCG/N$, there exist two standard flats $\mathcal F,\mathcal F'$, supported on orbits of annuli, whose coarse intersection is within uniformly bounded Hausdorff distance from $x$. Indeed, if this is the case then $f$ and $\ov  g$ should uniformly coarsely agree on the coarse intersection $\mathcal F\Tilde{\cap}\mathcal F'$, and therefore on $x$. In turn, since $\MCG/N$ acts transitively on itself and maps orbits of annuli to orbits of annuli, it is enough to exhibit a single pair $\mathcal F,\mathcal F'$ of standard flats, supported on orbits of annuli, whose coarse intersection is bounded. 
         
    In turn, as a consequence of Lemma \ref{lem:coarse_int_orthants}, if $\mathcal F$ is supported on $\{\ov  U_i\}_{i=1}^\nu$, $\mathcal F'$ is supported on $\{\ov  V_i\}_{i=1}^\nu$, and $\{\ov  U_i\}_{i=1}^\nu\cap \{\ov  V_i\}_{i=1}^\nu=\emptyset$, then the coarse intersection is bounded. Thus, we are left to find two disjoint collections $\{\ov  U_i\}_{i=1}^\nu$ and $\{\ov  V_i\}_{i=1}^\nu$ of pairwise orthogonal orbits of annuli. By Lemma \ref{lem:lift}, it is enough to find two pants decompositions $\{U_i\}_{i=1}^\nu$ and $\{V_i\}_{i=1}^\nu$ such that $\{U_i\}_{i=1}^\nu\cap\{V_i\}_{i=1}^\nu=\emptyset$ and the subgraph of $\CS$ spanned by the core curves of $\{U_i\}_{i=1}^\nu\cup\{V_i\}_{i=1}^\nu$ is contained in a closed ball of radius $2$. To find such annuli, start with any pants decomposition $\{U_i\}_{i=1}^\nu$, and then successively replace each $U_i$ with an annulus $V_i$ which does not intersect any $V_j$ for $j<i$ nor any $U_k$ for $k>i$. By construction, $V_1,\ldots, V_{\nu-1}$ are disjoint from $U_\nu$, so the core curves of $\{U_i\}_{i=1}^\nu\cup\{V_i\}_{i=1}^\nu$ lie in the closed ball of radius $2$ around the core of $U_\nu$. This proves Item \ref{item:1}.
     
    \textbf{\ref{item:2}} Let $f\colon \MCG/N\to \MCG/N$ be a $(L,L)$-quasi-isometry which lies within finite distance from the identity. By Item \ref{item:1} we know that $f$ lies within distance $R$ from the left multiplication by \emph{any} element $\ov g$ which induces $f_{\CSN}$. In turn $f_{\CSN}$ is induced by $f_{Hin}$, thus if we show that this map is the identity then $\ov g$ can be chosen to be the identity, and the corollary follows. Now, for every good hinge $\sigma$, $f_{Hin}(\sigma)$ was defined in Remark \ref{rem:f_hin} as the unique hinge such that $d_{Haus}(h_{f_{Hin}(\sigma)}, f(h_\sigma))<\infty$. But then, since $d_{Haus}(f(h_{\sigma}),h_\sigma)<\infty$ we must have that $f_{Hin}(\sigma)=\sigma$, that is, $f_{Hin}$ is the identity. 
\end{proof}

\section{Algebraic rigidity}\label{sec:algrig}
\noindent Here we show that, whenever $\MCG/N$ satisfies Convention \ref{conv:surface-inherited}, the automorphism group of $\MCG/N$ and its abstract commensurator are both isomorphic to $\MCG/N$, via the action of $\MCG/N$ on itself by conjugation. This is Theorem \ref{thmintro:alg} from the introduction, which is covered by Corollaries \ref{cor:outmcg} and \ref{cor:commmcg} below. The main result of this Section is the following:

\begin{thm}\label{thm:Out}
        Let $S$ be a surface of complexity at least $2$, excluding $S_{1,2}$ and $S_{2,0}$, and let $\MCG/N$ satisfy Convention \ref{conv:surface-inherited}. Then any isomorphism $\phi\colon H\to H'$ between finite index subgroups of $\MCG/N$ is the restriction of an inner automorphism.
\end{thm}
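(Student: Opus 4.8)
The plan is to produce the conjugating element via quasi-isometric rigidity, and then upgrade the coarse agreement to an honest equality of homomorphisms using the acylindrical action on $\CSN$. First I would note that an isomorphism $\phi:\,H\to H'$ between finite-index subgroups of $\MCG/N$ is, after composing with the inclusions, a quasi-isometry of $\MCG/N$ to itself (here we use that both $H$ and $H'$ are quasi-isometrically embedded with finite-index image, so the inclusions are quasi-isometries; this is where the finite-index hypothesis enters, as discussed after Question~\ref{quest:injective}). By the proof of Theorem~\ref{thm:qirigid}, Item~\eqref{item:1}, this quasi-isometry lies within bounded distance of left multiplication by a unique element $\ov g\in\MCG/N$; uniqueness holds because $\MCG/N$ acts on itself by left multiplication with trivial coarse stabiliser (equivalently, $\CSN$ is unbounded by Lemma~\ref{lem:CSN_unbounded}, so the bounded-distance relation separates distinct left multiplications). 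Conjugation by $\ov g$ is then a self-quasi-isometry of $\MCG/N$, and the composite $\psi = (\text{conj}_{\ov g})^{-1}\circ\iota\circ\phi:\,H\to \MCG/N$ is an injection lying within bounded distance of the inclusion $H\hookrightarrow \MCG/N$.

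Next I would aim to show that $\psi$ is the inclusion on the nose, i.e.\ that $\phi = \text{conj}_{\ov g}|_H$. The key tool is that $\MCG/N$ acts acylindrically on $\CSN$: by \cite[Corollary~14.4]{HHSI} it acts acylindrically on the top-level coordinate space of its HHG structure, which under Convention~\ref{conv:surface-inherited} is $\MCG/N$-equivariantly quasi-isometric to $\CSN$. For each $h\in H$ the element $\psi(h)h^{-1}\in\MCG/N$ satisfies: left multiplication by $\psi(h)h^{-1}$ moves points of $\MCG/N$ a uniformly bounded amount, since $\psi(h)$ and $h$ both lie within the same bounded distance of the map $x\mapsto hx$ (using that $\psi$ is within bounded distance of the inclusion and is a homomorphism, so it is "equivariant up to bounded error"). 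Translating through the quasi-isometry $\MCG/N\to\CSN$, the element $\psi(h)h^{-1}$ acts on $\CSN$ with uniformly bounded translation length for all $h\in H$. An acylindrically hyperbolic group contains no infinite subgroup of uniformly bounded elements unless that subgroup is finite; more precisely, by the acylindricity there are only finitely many elements with translation length below any fixed threshold that also commute appropriately, so the set $\{\psi(h)h^{-1}:h\in H\}$ is finite. One then checks this set is a subgroup-like object — actually the cleanest route is: the map $h\mapsto\psi(h)h^{-1}$ has image in a fixed finite set, and a standard argument (as in \cite{Commensurating}) shows any such "bounded cocycle" into an acylindrically hyperbolic group with no nontrivial finite normal subgroup is trivial. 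Here Lemma~\ref{lem:nofinite} (no nontrivial finite normal subgroups in $\MCG/N$) is exactly what rules out a genuinely finite discrepancy, forcing $\psi(h)=h$ for all $h$.

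The main obstacle I anticipate is the last step: passing from "$\psi$ and the inclusion are boundedly close" to "$\psi$ equals the inclusion". Bounded closeness of two homomorphisms does not in general force equality — one needs to exploit the group structure together with the geometry. The cleanest implementation invokes the machinery of \cite{Commensurating} on commensurating actions / acylindrical actions on hyperbolic spaces: the "difference" $h\mapsto\psi(h)h^{-1}$ behaves like a bounded $1$-cocycle for the conjugation action, and the relevant vanishing result there says such cocycles are coboundaries valued in the (trivial, by Lemma~\ref{lem:nofinite}) finite normal part, hence zero. I would cite the appropriate statement from \cite{Commensurating} rather than reprove it, and the surface hypotheses (complexity $\ge 3$, not $S_{2,0}$, or $S_{0,5}$) are inherited verbatim from the inputs Theorem~\ref{thm:ivanov}, Theorem~\ref{thm:ivanov_S20}, Theorem~\ref{thm:ivanov_S5} needed to run Theorem~\ref{thm:qirigid} in the first place. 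Finally, uniqueness of $\ov g$ follows from Lemma~\ref{lem:aut_inj} (to be proved separately), completing the identification of $\phi$ with an inner automorphism.
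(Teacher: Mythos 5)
Your proof takes a genuinely different route from the paper's. The paper factors Theorem~\ref{thm:Out} through an abstract statement (Theorem~\ref{thm:Alg_from_qi}) and applies \cite[Theorem~7.1]{Commensurating} directly to $\phi$: it verifies the commensurating hypothesis ($\phi(h)\stackrel{G}{\approx}h$ for every loxodromic WPD $h\in H$, via extending $\phi$ to a quasi-isometry, using QI-rigidity to find $\ov g$, and invoking a Cayley-graph lemma from \cite{mangioni2023rigidity}) together with $E_G(H)=\{1\}$ (which reduces to Lemma~\ref{lem:nofinite}), and then lets Theorem~7.1 produce the inner automorphism. You instead \emph{first} pre-correct $\phi$ by the inner automorphism provided by QI-rigidity, producing a homomorphism $\psi\colon H\to\MCG/N$ uniformly close to the inclusion, and then aim to show the residual bounded discrepancy vanishes. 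Both strategies are reasonable; yours front-loads the identification of the candidate element (which is arguably cleaner conceptually, since it only uses Theorem~7.1-style machinery to kill a bounded error rather than to build the element), while the paper's stays within the exact hypothesis scheme of Theorem~7.1 and outsources the hard work entirely to that reference.

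Two cautions. First, there is a bookkeeping slip: with the usual left-invariant word metric, what the bound $d(\psi(h),h)\le C$ controls is $h^{-1}\psi(h)$, not $\psi(h)h^{-1}$; the latter has word length up to $2|h|+C$ and is not uniformly bounded. The element whose conjugacy class you want to control is $k(h)=h^{-1}\psi(h)$, and the approximate equivariance $d(\psi(h)x,hx)\le C'$ holds for $x$ ranging over $H$ (not all of $\MCG/N$), which is what gives finiteness of the $H$-conjugacy class of $k(h)$. Second, the claim that ``a standard argument as in \cite{Commensurating} shows any such bounded cocycle is trivial'' is not an off-the-shelf citation: Theorem~7.1 there is about commensurating WPD elements, not about homomorphisms at bounded distance. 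What you actually need, and should spell out, is something like: $k(h)$ has finite $H$-conjugacy class, hence finite $\MCG/N$-conjugacy class (as $H$ has finite index), so $k(h)$ lies in the FC-centre of $\MCG/N$; in an acylindrically hyperbolic group the FC-centre equals the maximal finite normal subgroup, which is trivial here by Lemma~\ref{lem:nofinite}, so $k(h)=1$ and $\psi$ is the inclusion. This is a correct argument, but it deserves to be stated rather than waved at, since it is precisely where acylindrical hyperbolicity and the absence of finite normal subgroups enter.
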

\noindent Leaving $S_{2,0}$ out is necessary, since $\Out(\mathcal{MCG}^{\pm}(S_{2,0}))\cong \mathbb{Z}_2 \oplus \mathbb{Z}_2$ \cite{mccarthy}. We expect that a suitable modification of our arguments, possibly involving the full power of the machinery from \cite{Commensurating}, could recover this result and extend it to large translation quotients, but we prefer not to pursue this direction for simplicity and brevity.

We first recall some definitions and a theorem from \cite{Commensurating}.

\begin{defn}
Two elements $h$ and $g$ of a group $G$ are \emph{commensurable}, and we write $h \stackrel{G}{\approx} g$, if there exist $m,n\in\mathbb{Z}\setminus\{0\}$, $k\in G$ such that $kg^m k^{-1}=h^n$ (that is, if they have non-trivial conjugate powers).
\end{defn}

\begin{defn}
If a group $G$ acts by isometries on a hyperbolic space $\mathcal{S}$, an element $g\in G$ is \emph{loxodromic} if for some $x\in\mathcal{S}$ the map $\mathbb{Z}\to \mathcal{S}$, $n\mapsto g^n(x)$ is a quasi-isometric embedding. In the same setting, an element $g\in G$ is \emph{weakly properly discontinuous}, or \emph{WPD}, if for every $\varepsilon>0$ and any $x\in \mathcal{S}$ there exists $N\ge 0$ such that
$$\left | \left\{h\in G \mid \max\left\{d_{ \mathcal{S}}\left(x, h(x)\right), d_{ \mathcal{S}}\left(g^N(x), hg^N(x)\right)\right\}\le \varepsilon\right\}\right |<\infty$$
We denote by $\mathcal{L}_{WPD}$ the set of loxodromic WPD elements.
\end{defn}
The following result is a special case of \cite[Theorem 7.1]{Commensurating}. Roughly speaking, the theorem says that if $H$ is a subgroup of $G$ and both act “interestingly enough” on some hyperbolic space, then any homomorphism $\phi:H\to G$ is either (the restriction of) an inner automorphism or it maps some loxodromic WPD to an element which is not commensurable to it.

\begin{thm}\label{7.1}
    Let $G$ be a group acting coboundedly and by isometries on a hyperbolic space $\mathcal{S}$, with loxodromic WPD elements. Let $H\le G$ be a non-virtually-cyclic 
    subgroup such that $H\cap\mathcal{L}_{WPD}\neq \emptyset$, and suppose that $H$ does not normalise any non-trivial finite subgroup of $G$. Let $\phi\colon H\to G$ be a homomorphism such that $\phi(h) \stackrel{G}{\approx} h$ for every $h\in H\cap \mathcal{L}_{WPD}$. Then $\phi$ is the restriction of an inner automorphism.
\end{thm}

\noindent Our proof of Theorem \ref{thm:Out} will be very similar to that of \cite[Theorem 9.1]{mangioni2023rigidity}, from which we now abstract a general statement, both for clarity and for future reference. Recall that a finitely generated group is \emph{acylindrically hyperbolic} if it is not virtually-cyclic and it acts coboundedly and by isometries on a hyperbolic space $\mathcal{S}$, with loxodromic WPD elements.

\begin{thm}[Algebraic rigidity from quasi-isometric rigidity]\label{thm:Alg_from_qi}
    Let $G$ be an acylindrically hyperbolic group. Suppose that $G$ has no non-trivial finite normal subgroups, and that every self-quasi-isometry of $G$ is within bounded distance from the left multiplication by some element of $G$. Then any isomorphism between finite index subgroups of $G$ is the restriction of an inner automorphism.
\end{thm}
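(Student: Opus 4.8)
The plan is to deduce Theorem \ref{thm:Alg_from_qi} from the structural input of \cite[Theorem 7.1]{Commensurating} (our Theorem \ref{7.1}), with the quasi-isometric rigidity hypothesis supplying the ``candidate'' element. So let $\phi:\,H\to H'$ be an isomorphism between finite index subgroups of $G$. First I would record that $H$ is itself an acylindrically hyperbolic group: a finite index subgroup of an acylindrically hyperbolic group with no finite normal subgroups is again acylindrically hyperbolic (this is standard, e.g. by \cite{Osin_acyl} or the WPD-restriction arguments of \cite{Commensurating}), and in particular $H$ is not virtually cyclic and $H\cap \mathcal L_{WPD}\neq\emptyset$ for the action of $G$ on $\mathcal S$. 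Moreover, since $\phi$ is an isomorphism between finite-index subgroups, it is a quasi-isometry of $G$ (with respect to a word metric), and by hypothesis it lies within bounded distance from left multiplication by some $g\in G$.

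The next step is to upgrade ``within bounded distance from $\ell_g$'' to a commensurability statement on loxodromic WPD elements, which is exactly the hypothesis needed to feed $\phi$ into Theorem \ref{7.1}. Fix $h\in H\cap\mathcal L_{WPD}$. I would consider the two sequences $\{\phi(h^n)\}_n = \{\phi(h)^n\}_n$ and $\{g h^n g^{-1}\}_n$ in $G$. Since $\phi$ is within bounded distance of $\ell_g$, for each $n$ the element $\phi(h^n)$ is a bounded distance (independent of $n$) from $g h^n$; combining this with the fact that $\{h^n\}$ is a quasigeodesic (as $h$ is loxodromic, hence of infinite order with linearly growing word length) one sees that $\phi(h)^n$ and $gh^ng^{-1}$ stay within bounded distance of each other for all $n$. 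Two infinite-order elements whose positive powers fellow-travel must have commensurable powers: conjugating $\phi(h)$ so that it and $g h g^{-1}$ have the same stable translation-length axis-data, one gets $k (\phi(h))^m k^{-1} = (ghg^{-1})^\ell$ for suitable nonzero $m,\ell$, i.e. $\phi(h)\stackrel{G}{\approx} h$. Hence after composing $\phi$ with the inner automorphism given by $g^{-1}$ — equivalently, replacing $\phi$ by $x\mapsto g^{-1}\phi(x)g$ — we may assume the resulting homomorphism $\psi:\,H\to G$ sends every loxodromic WPD element of $H$ to a commensurable element, and we only need to show $\psi$ is inner.

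Now I would apply Theorem \ref{7.1} with this $\psi$. Its hypotheses are: $G$ acts coboundedly by isometries on a hyperbolic space with loxodromic WPD elements (this is the definition of acylindrically hyperbolic, which we assumed); $H$ is non-virtually-cyclic with $H\cap\mathcal L_{WPD}\neq\emptyset$ (established above); $\psi$ sends loxodromic WPD elements to commensurable elements (just arranged); and $E_G(H)=\{1\}$. For the last point, $E_G(H)$ is the unique maximal finite subgroup of $G$ normalised by $H$; since $H$ has finite index in $G$, any subgroup normalised by $H$ is normalised by a finite-index subgroup of $G$, and one checks that $E_G(H)$ is then normal in $G$ (its finitely many $G$-conjugates are all normalised by $H$, hence by maximality all equal, so $E_G(H)\unlhd G$) — and $G$ has no non-trivial finite normal subgroup by hypothesis, so $E_G(H)=\{1\}$. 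Therefore Theorem \ref{7.1} gives that $\psi$, and hence $\phi$, is the restriction of an inner automorphism.

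The main obstacle I anticipate is the second step: turning the coarse-geometric statement ``$\phi \approx \ell_g$'' into the clean algebraic statement ``$\phi(h)\stackrel{G}{\approx} h$ for all loxodromic WPD $h$''. Making this rigorous requires knowing that loxodromic WPD elements have quasi-geodesic orbits in the \emph{word metric} on $G$ (not just in $\mathcal S$), which holds because such an element is contained in a virtually cyclic WPD-maximal subgroup that is (quasi-isometrically) undistorted — this is part of the theory of \cite{dgo, Commensurating} and should be cited carefully. The remaining subtlety is the elementary but fiddly fact that two infinite-order elements of a group whose positive powers stay uniformly close must have commensurable powers; I would either cite this from \cite{Commensurating} or spell it out via the fact that both then act loxodromically on $\mathcal S$ with the same endpoint pair, forcing them into a common virtually cyclic subgroup $E_G(h)$, inside which commensurability of powers is immediate. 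Everything else — acylindrical hyperbolicity passing to finite-index subgroups, triviality of $E_G(H)$ — is routine bookkeeping.
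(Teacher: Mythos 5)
Your proposal follows the same route as the paper: feed $\phi$ into Theorem \ref{7.1} after verifying its hypotheses, using the quasi-isometric rigidity assumption to produce the candidate $g$ whose left-multiplication coarsely matches $\phi$. The paper's proof delegates the two nontrivial verifications (the commensurating property and the triviality of $E_G(H)$) to \cite[Lemmas 10.5 and 10.6]{mangioni2023rigidity}; you instead sketch them directly, and the commensurating sketch is essentially the same ``bounded difference of powers forces a common power by pigeonhole in a finite ball'' argument that underlies the cited lemma. One small point: the extra step of conjugating $\phi$ by $g^{-1}$ is superfluous, since once you know $\phi(h)\stackrel{G}{\approx}h$ you can apply Theorem \ref{7.1} to $\phi$ directly.

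There is a genuine (though easily repaired) gap in your argument that $E_G(H)=\{1\}$. You assert that the $G$-conjugates of $E_G(H)$ are all normalised by $H$, but $gE_G(H)g^{-1}$ is normalised by $gHg^{-1}$, not by $H$, and $H$ need not be normal in $G$. The correct version of your idea: pass to the normal core $H''=\bigcap_{g\in G}gHg^{-1}$, a finite-index normal subgroup of $G$ contained in $H$ (which still meets $\mathcal L_{WPD}$, since a suitable power of any element of $H\cap\mathcal L_{WPD}$ lies in $H''$). Then $E_G(H)\le E_G(H'')$ because everything normalised by $H$ is normalised by $H''$, while $E_G(H'')$ is genuinely normal in $G$ (each $gE_G(H'')g^{-1}$ is normalised by $gH''g^{-1}=H''$, hence contained in $E_G(H'')$ by maximality), so $E_G(H'')$ is a finite normal subgroup of $G$ and therefore trivial. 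This is the content of the lemma the paper cites, and your write-up should be adjusted along these lines.
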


\begin{proof}
We just need to verify that the hypotheses of Theorem \ref{7.1} are satisfied for any isomorphism $\phi\colon H \to H'$ between subgroups of finite index of $G$. By definition of acylindrical hyperbolicity, the hypotheses on the action are satisfied. Moreover, $G$ is not virtually-cyclic, and therefore neither is $H$ since its index is finite. Furthermore, in view of the general \cite[Lemma 9.6]{mangioni2023rigidity}, if $G$ has no non-trivial finite normal subgroup then $H$ does not normalise any non-trivial finite subgroup.

We are left to show that $\phi$ has the required commensurating property, that is, for every $h\in H\cap \mathcal{L}_{WPD}$ we have that  $\phi(h) \stackrel{G}{\approx} h$. Indeed, $\phi$ can be extended to a quasi-isometry $\Phi\colon G\to G$ (for example, by precomposing $\phi$ with any closest-point projection $G\to H$). Hence we can find an element $g\in G$ whose left-multiplication is uniformly close to $\Phi$. From here, one can argue as in \cite[Lemma 9.5]{mangioni2023rigidity}, which only uses some basic properties of Cayley graphs of finitely generated groups, to prove that $\phi(h) \stackrel{G}{\approx} h$ whenever $h\in H$ has infinite order, and in particular whenever $h$ is loxodromic WPD.
\end{proof}

\noindent Hence, in order to prove Theorem \ref{thm:Out}, we are left to verify the hypotheses of Theorem \ref{thm:Alg_from_qi} when $G=\MCG/N$ and the surface $S$ is as in Theorem \ref{thm:Out}. We know that $\MCG/N$ is quasi-isometrically rigid, by Theorem \ref{thm:qirigid}, so we just need to show that it is acylindrically hyperbolic (Lemma \ref{lem:acyl_hyp}) and that it has no non-trivial finite normal subgroups (Lemma \ref{lem:nofinite}).

\begin{lem}\label{lem:acyl_hyp}
    Let $\MCG/N$ satisfy Convention \ref{conv:surface-inherited}. Then $\MCG/N$ is acylindrically hyperbolic.
\end{lem}
\begin{proof}
Since $\CSN$ is the main coordinate space in the hierarchical structure for $\MCG/N$, by \cite[Corollary 14.4]{HHSI} it is enough to check that $\CSN$ is unbounded, which we proved in Lemma \ref{lem:CSN_unbounded}, and that $\MCG/N$ is non-virtually-cyclic. To see the latter, notice that, if $x,y\in\CS^{(0)}$ are disjoint curves, then the $\mathbb{Z}^2$ subgroup generated by the Dehn twists $T_x$ and $T_y$ around $x$ and $y$, respectively, must inject in the quotient, since every element in $\langle T_x, T_y\rangle$ fixes the curve $x$ and has therefore trivial translation length.
\end{proof}

\begin{lem}\label{lem:nofinite}
    Let $S$ be a surface of complexity at least $2$, excluding $S_{1,2}$ and $S_{2,0}$, and let $\MCG/N$ satisfy Convention \ref{conv:surface-inherited}.  Then $\MCG/N$ has no non-trivial finite normal subgroup.
\end{lem}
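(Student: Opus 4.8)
The strategy is to combine the \emph{faithfulness} of the action of $\MCG/N$ on $\CSN$ with the surface-inherited HHG structure: I will show that any finite normal subgroup acts trivially on $\CSN$, hence is trivial. Note that for the surfaces allowed in the statement (complexity $\ge 3$, not $S_{2,0}$, or $S_{0,5}$) the map $\MCG/N\to\text{Aut}(\CSN)$ is an \emph{isomorphism}, by Theorem \ref{thm:ivanov} and Theorem \ref{thm:ivanov_S5} respectively. So, assuming $\ov K\unlhd\MCG/N$ is finite and normal, it suffices to prove that every $\ov k\in\ov K$ fixes each vertex of $\CSN$; then $\ov k=1$ by injectivity.

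Fix a vertex $\ov x\in\CSN^{(0)}$, which we identify with the corresponding annulus orbit in $\ov\frakS$ (the $N$-orbit of a regular neighbourhood of any representative curve $x$). Let $\ov T_{\ov x}\in\MCG/N$ be the image of the Dehn twist $T_x$; this does not depend on the representative $x$, since $\overline{T_{nx}}=\overline n\,\overline{T_x}\,\overline n^{-1}=\overline{T_x}$ for $n\in N$, and it has infinite order, because $T_x^{\,j}$ fixes $x$ and hence has minimum displacement $0$, so $\langle T_x\rangle\cap N=\{1\}$ by Definition \ref{defn:lrq}. Since $\ov K$ is finite and normal, conjugation gives a homomorphism $\MCG/N\to\text{Aut}(\ov K)$ with finite target, so its restriction to $\langle\ov T_{\ov x}\rangle\cong\mathbb Z$ has kernel $\langle\ov T_{\ov x}^{\,m}\rangle$ for some $m\ge1$; in particular $\ov T_{\ov x}^{\,m}$ commutes with every $\ov k\in\ov K$.

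Now I invoke Convention \ref{conv:surface-inherited}. The element $\ov T_{\ov x}^{\,m}$ fixes the domain $\ov x\in\ov\frakS\subseteq\mathfrak T$ and acts on its coordinate space $\C\ov x$ — which is quasi-isometric to the annular curve graph of $x$, hence to $\mathbb Z$ — as a loxodromic isometry; on the coordinate space of every other domain of $\mathfrak T$ that it stabilises it acts with bounded orbits (Dehn twists are reducible, so fix a vertex of any curve graph of $\ov\frakS$ on which they act; domains in $\mathfrak T'$ either have uniformly bounded coordinate space or admit no orthogonal, which is incompatible with $\ov T_{\ov x}^{\,m}$ lying in a $\mathbb Z^2$ of commuting twist powers). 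Thus $\ov x$ is the \emph{unique} domain of $\mathfrak T$ fixed by $\ov T_{\ov x}^{\,m}$ on which it acts loxodromically. This property is equivariant for the $\MCG/N$-action on $\mathfrak T$, so the unique such domain for the conjugate $\ov k\,\ov T_{\ov x}^{\,m}\,\ov k^{-1}$ is $\ov k\cdot\ov x$; since $\ov k\,\ov T_{\ov x}^{\,m}\,\ov k^{-1}=\ov T_{\ov x}^{\,m}$, we get $\ov k\cdot\ov x=\ov x$. As $\ov x$ ranges over all annulus orbits, equivalently all vertices of $\CSN$, this shows $\ov k$ fixes $\CSN$ pointwise, so $\ov k=1$ and $\ov K=\{1\}$.

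The main obstacle is the last paragraph: one must verify that a high power of a Dehn twist remains detected by a single domain of $\mathfrak T$ after passing to the quotient, which is exactly what the surface-inherited structure is designed to give, and one must be slightly careful about the auxiliary domains $\mathfrak T'$. Without the HHG structure, a purely curve-graph lifting argument would have to rule out the possibility that $x$ and $\ov k\cdot x$ together fill $S$, in which case no curve disjoint from both is available to certify that the relevant element of $N$ is trivial.
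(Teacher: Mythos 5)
Your overall scaffolding coincides with the paper's: both proofs reduce to showing that every element of a finite normal subgroup $\ov K$ acts trivially on $\CSN$ (and then invoke the injectivity in Theorems~\ref{thm:ivanov} and~\ref{thm:ivanov_S5}), both exploit finiteness of $\ov K$ to find $m\ge 1$ with $\ov T_{\ov x}^{\,m}$ commuting with $\ov K$ (the paper does this per element $\ov f$, you do it for all of $\ov K$ at once via $\text{Aut}(\ov K)$), and both hinge on the fact that $\ov T_{\ov x}^{\,m}$ acts loxodromically on $\C\ov U$ because $\C U\to\C\ov U$ is a quasi-isometry. The structure is genuinely different, though: the paper fixes $\ov f\in\ov K$ and a vertex $\ov x$ \emph{moved} by $\ov f$, observes that $T_x^{-m}T_{f(x)}^m\in N$, and derives a contradiction in two cases (adjacent, giving displacement zero; non-adjacent, giving a fixed point for $\ov T_x^{-m}$ in $\C\ov U$). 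You instead try to identify $\ov x$ as the \emph{unique} ``active'' domain of $\ov T_{\ov x}^{\,m}$ and deduce $\ov k\cdot\ov x=\ov x$ by equivariance.

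That uniqueness claim is where the gap is. You assert that on every other $\ov W\in\ov\frakS$ stabilised by $\ov T_{\ov x}^{\,m}$ the action on $\C\ov W$ is bounded because ``Dehn twists are reducible, so fix a vertex of any curve graph on which they act.'' But $\ov T_{\ov x}^{\,m}$ stabilising $\ov W$ only means $T_x^m W'=nW'$ for \emph{some} representative $W'$ and some $n\in N$, not that $T_x^m$ fixes a representative. If $n\ne 1$, the induced map on $\C\ov W$ is conjugate to the action of $n^{-1}T_x^m$ on $\C W'$, and $n^{-1}T_x^m$ need not be reducible on $W'$ --- it could a priori restrict to a pseudo-Anosov of $W'$, hence act loxodromically on $\C\ov W$. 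Ruling this out requires showing $n=1$, which is easy when $W'$ has a boundary curve disjoint from $x$, but not when $\partial W'$ and $x$ fill $S$ (precisely the filling configuration you flag at the end of your proof, but do not resolve). The auxiliary-domain case is vaguer still: ``incompatible with $\ov T_{\ov x}^{\,m}$ lying in a $\mathbb Z^2$'' appeals to a Morse/contracting-type consequence that is not stated in Convention~\ref{conv:surface-inherited} and would need its own argument. The paper sidesteps all of this by never trying to characterise the full set of loxodromic domains: it only needs that $\ov T_x^{-m}$ cannot fix a point of the single coordinate space $\C\ov U$. If you want to keep your structure, you would need to prove the uniqueness claim along these lines, at which point you are essentially carrying out the paper's case analysis anyway.
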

\begin{proof}
Let $K\unlhd \MCG/N$ be a finite normal subgroup, and towards a contradiction let $\ov f\in K$ be a non-trivial element. If $\ov f$ acts trivially on $\CSN$ then $\ov f$ is the identity, by the injectivity part of either Theorem \ref{thm:ivanov} or Theorem \ref{thm:ivanov_S5}. Thus let $\ov x\in\CSN^{(0)}$ be a vertex which is not fixed by $\ov f$. Let $x\in\CS^{(0)}$ be a representative for $\ov x$, let $f\in\MCG$ be a representative for $\ov f$, let $T_x\in \MCG$ be the Dehn Twist around $x$, and let $\ov{T_x}\in\MCG/N$ be its image in the quotient. Since $K$ is finite, we can find $m\in\mathbb{N}_{>0}$ such that $\ov f=(\ov{T_x})^{-m}\ov f (\ov{T_x})^m$. Hence, denoting by $T_{f(x)}$ the Dehn twist around $f(x)$, and by $\ov 1\in \MCG/N$ the identity element of the quotient, we have
$$\ov 1=(\ov{T_x})^{-m}\left(\ov f (\ov{T_x})^m\ov f^{-1}\right)=(\ov{T_x})^{-m}(\ov{T_{f(x)}})^{m},$$
where we used how Dehn twists behave under conjugation. In other words, we have that $T_x^{-m}T_{f(x)}^{m}\in N$. 

Now, if $\ov x$ and $\ov f(\ov x)$ are adjacent in $\CSN$, then we could have chosen representatives $x$ and $f$ such that $x$ and $f(x)$ are disjoint curves. Then $T_x^{-m}T_{f(x)}^{m}$ would be non-trivial but it could not be an element of $N$, since its translation length is zero. 

Hence suppose that $\ov x$ and $\ov f(\ov x)$ are not adjacent in $\CSN$. Let $U\in\frakS$ be the annulus with core curve $x$, let $\ov U\in\ov\frakS$ be its $N$-orbit, and let $\rho_{\ov U}^{\ov{f(U)}}\subset \C\ov U$ be the bounded subset given by the HHG structure. Notice that
$$\rho_{\ov U}^{\ov{f(U)}}=\rho_{\ov U}^{(\ov{T_x})^{-m}(\ov{T_{f(x)}})^{m}\ov{f(U)}}=\rho_{\ov U}^{(\ov{T_x})^{-m}\ov{f(U)}}=(\ov{T_x})^{-m}\left(\rho_{\ov U}^{\ov{f(U)}}\right),$$
where we used that $T_{f(x)}$ fixes $f(x)$ and that projections are equivariant in a HHG structure. This contradicts the fact that $\ov{T_x}$ acts loxodromically on the quasiline $\C\ov U$, since $T_x$ acts loxodromically on $\C U$ and the projection $\C U\to \C \ov U$ is a $\text{Stab}(U)$-equivariant isometry. 
\end{proof}

\noindent As a consequence of Theorem \ref{thm:Out}, we get that any automorphism of $\MCG/N$ is the conjugation by some element $\ov g\in\MCG/N$. The following lemma states that such $\ov g$ is also unique:

\begin{lem}\label{lem:aut_inj}
Let $S$ be a surface of complexity at least $2$, excluding $S_{1,2}$ and $S_{2,0}$, and let $\MCG/N$ satisfy Convention \ref{conv:surface-inherited}. Then $\MCG/N$ is centerless.
\end{lem}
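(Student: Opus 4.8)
The plan is to transcribe the proof of Lemma \ref{lem:nofinite} almost verbatim, the only change being that a central element is conjugated to itself by \emph{every} power of \emph{every} Dehn twist, whereas in Lemma \ref{lem:nofinite} this only held for one power $m$, produced by the finiteness of the normal subgroup. So let $\ov f\in\MCG/N$ be a central element; I want to deduce $\ov f=\ov 1$. First, if $\ov f$ acts trivially on $\CSN$, then $\ov f=\ov 1$ by the injectivity part of Theorem \ref{thm:ivanov} (or Theorem \ref{thm:ivanov_S5} when $S=S_{0,5}$); this is exactly the point where the hypothesis on the topological type of $S$ is used, and it is what fails for $S_{2,0}$ and the low-complexity cases.

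So assume $\ov f$ moves some $\ov x\in\CSN^{(0)}$, and fix representatives $x\in\CS^{(0)}$ and $f\in\MCG$. Writing $\ov{T_x}$ for the image in $\MCG/N$ of the Dehn twist $T_x$, centrality of $\ov f$ together with the conjugation rule for Dehn twists gives $\ov{T_x}=\ov f\,\ov{T_x}\,\ov f^{-1}=\ov{T_{f(x)}}$; equivalently $(\ov{T_x})^{-m}(\ov{T_{f(x)}})^{m}=\ov 1$ already with $m=1$. From this point the argument of Lemma \ref{lem:nofinite} applies without change. If $\ov x$ and $\ov f(\ov x)$ are adjacent in $\CSN$, we choose the representatives so that $x$ and $f(x)$ are disjoint curves, whence $T_x^{-1}T_{f(x)}$ fixes $x$ and is non-trivial (because $f(x)\ne x$), contradicting that $N$ has minimum displacement at least $8$. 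If $\ov x$ and $\ov f(\ov x)$ are not adjacent, we let $U$ be the annulus with core $x$, project to its coordinate space $\C\ov U$ (which is quasi-isometric to $\C U$), and use equivariance of the HHG projections to conclude that a power of $\ov{T_x}$ fixes $\rho_{\C\ov U}(\ov f(\ov x))$, contradicting loxodromicity of the $\ov{T_x}$-action on $\C\ov U$.

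I do not expect a genuine obstacle: the only delicate point is the non-adjacent case, which is precisely the technical heart of Lemma \ref{lem:nofinite} and needs nothing new once $m$ is set to $1$. If a more structural phrasing is preferred, one can instead argue that $\MCG/N$ is acylindrically hyperbolic by Lemma \ref{lem:acyl_hyp}, so that every loxodromic WPD element has virtually cyclic centralizer; hence the center $Z(\MCG/N)$ is virtually cyclic, and were it infinite it would contain an infinite-order central, hence loxodromic WPD, element whose centralizer is all of $\MCG/N$ — impossible, since $\MCG/N$ is not virtually cyclic. Thus $Z(\MCG/N)$ is a finite normal subgroup, and it is trivial by Lemma \ref{lem:nofinite}.
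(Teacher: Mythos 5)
Your main argument is correct and is essentially the paper's proof: the paper likewise uses centrality to deduce $\ov{T_x}=\ov{T_{g(x)}}$ for every curve $x$ and then invokes the projection argument of Lemma \ref{lem:nofinite} (with $m=1$) to conclude $\ov g$ fixes every vertex of $\CSN$, after which Theorem \ref{thm:ivanov} or \ref{thm:ivanov_S5} finishes. One caveat on the alternative ``structural'' phrasing you offer at the end: the step ``infinite-order central, hence loxodromic WPD'' is not valid as stated, since infinite order alone does not imply loxodromic WPD (Dehn twists in $\MCG$ are infinite-order but not loxodromic on $\CS$). The fix is to observe that $Z(\MCG/N)$ is contained in the centralizer, hence in the elementary closure $E(g)$, of any loxodromic WPD element $g$; if $Z(\MCG/N)$ were infinite it would be virtually cyclic of finite index in $E(g)$ and thus share a nontrivial power with $g$, and that common power would be a central loxodromic WPD element — at which point your contradiction with $\MCG/N$ not being virtually cyclic goes through.
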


\begin{proof}
The centre of an acylindrically hyperbolic group is a finite normal subgroup \cite[Corollary 7.2]{Osin}, so the lemma follows from Lemmas \ref{lem:acyl_hyp} and \ref{lem:nofinite}.
\end{proof}

\noindent Combining Theorem \ref{thm:Out} and Lemma \ref{lem:aut_inj} we get:

\begin{cor}\label{cor:outmcg}
Let $S$ be a surface of complexity at least $2$, excluding $S_{1,2}$ and $S_{2,0}$, and let $\MCG/N$ satisfy Convention \ref{conv:surface-inherited}. Then the map $\MCG/N\to \Aut(\MCG/N)$, sending each element $\ov g\in \MCG/N$ to the conjugation by $\ov g$, is an isomorphism. In particular, $\Out(\MCG/N)$ is trivial.
\end{cor}

\noindent Finally, we recall the definition of the \emph{abstract commensurator} of a group $G$. Consider the set of all isomorphisms $H\to H'$ between finite-index subgroups of $G$. Let $\Comm(G)$ be the quotient of this set by the following equivalence relation: two isomorphisms are identified if they coincide on a finite index subgroup of $G$. Then $\Comm(G)$ can be endowed with a group structure, induced by composition. Our final result shows that $\Comm(\MCG/N)$ is “the smallest possible”:

\begin{cor}\label{cor:commmcg}
Let $S$ be a surface of complexity at least $2$, excluding $S_{1,2}$ and $S_{2,0}$, and let $\MCG/N$ satisfy Convention \ref{conv:surface-inherited}. Then the map $\MCG/N\to \Comm(\MCG/N)$, sending each element $\ov g\in \MCG/N$ to the conjugation by $\ov g$, is an isomorphism.
\end{cor}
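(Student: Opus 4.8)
The plan is to show that the natural map $\MCG/N\to \mathrm{Comm}(\MCG/N)$ is both injective and surjective, using Theorem \ref{thm:Out} for surjectivity and Lemma \ref{lem:aut_inj} for injectivity, exactly paralleling how Corollary \ref{cor:outmcg} was deduced. Write $G=\MCG/N$ for brevity. For surjectivity, take any element of $\mathrm{Comm}(G)$, represented by an isomorphism $\phi:\,H\to H'$ between finite-index subgroups of $G$. By Theorem \ref{thm:Out}, $\phi$ is the restriction of an inner automorphism, say conjugation by some $\ov g\in G$. Then conjugation by $\ov g$ and $\phi$ agree on the finite-index subgroup $H$, so they represent the same class in $\mathrm{Comm}(G)$; hence the class of $\phi$ lies in the image of the map $G\to\mathrm{Comm}(G)$. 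This shows surjectivity.

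For injectivity, suppose $\ov g\in G$ maps to the trivial element of $\mathrm{Comm}(G)$, i.e.\ conjugation by $\ov g$ restricts to the identity on some finite-index subgroup $H\le G$. I claim $\ov g$ is central in $G$, hence trivial by Lemma \ref{lem:aut_inj}. To see centrality, note that for every curve $x\in\CS^{(0)}$, the Dehn twist power $\ov{T_x}^{\,k}$ lies in $H$ for some $k\ge 1$ (since $H$ has finite index), and therefore $\ov g$ commutes with $\ov{T_x}^{\,k}$. Arguing as in the proof of Theorem \ref{thm:Out} (or of Lemma \ref{lem:aut_inj}), commuting with a power of $\ov{T_x}$ forces $\ov g(\ov x)=\ov x$: indeed $\ov{T_x}^{\,k}=\ov{T_{g(x)}}^{\,k}$ forces, via the subsurface projection $\rho_{\ov U}$ to the annular orbit $\ov U$ of $x$ and the fact that $T_x$ acts loxodromically on $\C\ov U$, that $x$ and $g(x)$ cannot be distinct. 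As this holds for every curve $\ov x\in\CSN^{(0)}$, the element $\ov g$ acts as the identity on $\CSN$, and so by Theorem \ref{thm:ivanov} (or Theorem \ref{thm:ivanov_S5} when $S=S_{0,5}$) $\ov g$ is trivial. Hence the map is injective.

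Finally, the map $G\to\mathrm{Comm}(G)$ is a group homomorphism by construction, since composition of inner automorphisms corresponds to multiplication in $G$ and the equivalence relation defining $\mathrm{Comm}(G)$ is compatible with composition. Combined with the injectivity and surjectivity just established, this shows it is an isomorphism. The only genuinely substantive input is Theorem \ref{thm:Out}, which in turn rests on quasi-isometric rigidity (Theorem \ref{thm:qirigid}) and the absence of finite normal subgroups (Lemma \ref{lem:nofinite}); everything else here is a routine repackaging, so I do not expect any real obstacle beyond correctly citing the already-established results.
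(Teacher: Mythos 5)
Your proposal is correct and follows essentially the same route as the paper: surjectivity is exactly Theorem~\ref{thm:Out}, and injectivity comes from noting that $\ov g$ commutes with a positive power of each $\ov{T_x}$, running the projection-to-$\C\ov U$ argument from Lemma~\ref{lem:aut_inj} to conclude $\ov g$ acts trivially on $\CSN$, and then applying Theorem~\ref{thm:ivanov} (or Theorem~\ref{thm:ivanov_S5}). One small presentational nit: you announce that you will show $\ov g$ is central and then invoke Lemma~\ref{lem:aut_inj}, but what your argument actually establishes directly is that $\ov g$ fixes every vertex of $\CSN$, which already gives $\ov g=\ov 1$ by combinatorial rigidity without any detour through centrality.
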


\begin{proof}
The map is surjective by Theorem \ref{thm:Out}. Towards injectivity, suppose that the conjugation by $\ov g$ is the identity on a finite-index subgroup $H$, that is, $\ov g$ commutes with $H$. Then $\langle \ov g\rangle \cap H$ is in the center of $H$, which is finite as $H$ is acylindrically hyperbolic. Hence $\langle \ov g\rangle$ is a finite subgroup of $\MCG/N$ normalised by $H$, and must therefore be trivial (again by \cite[Lemma 9.6]{mangioni2023rigidity}).
\end{proof}

\section{Examples of large translation quotients}\label{sec:examples}
\noindent We gather here some examples of large translation quotients satisfying Convention~\ref{conv:surface-inherited}, including random quotients.

\begin{ex}\label{ex:MCG}
    Definition \ref{defn:lrq} applies to the trivial subgroup $N=\{1\}$, thus $\MCG$ itself is a large translation quotient. Moreover, as we pointed out before, the HHG structure on $\MCG$ from \cite[Theorem 11.1]{HHSII} satisfies Convention~\ref{conv:surface-inherited}. 
\end{ex}

\noindent In view of the above example, Theorem \ref{thm:qirigid} recovers quasi-isometric rigidity of mapping class groups:
\begin{cor}[QI-rigidity of $\MCG$]\label{cor;qirig_mcg}
    Let $S$ be a surface of complexity at least $2$ which is not a torus with two punctures. If a finitely generated group $G$ and $\MCG$ are quasi-isometric then they are weakly commensurable.
\end{cor}

Moving to the next example, the following is \cite[Definition 6.1]{hhs_asdim}, which in turn builds on the notion of \emph{hyperbolically embedded} subgroups from \cite[Definition 2.1]{dgo}:
\begin{defn}[Hierarchically hyperbolically embedded subgroups]\label{ex:hhe_sgr}
    A finitely generated subgroup $H\le \MCG$ is \emph{hierarchically hyperbolically embedded} if there exists an infinite generating set $\mathcal{T}$ for $ \MCG$ such that 
    \begin{itemize}
        \item $\text{Cay}(\MCG,\mathcal{T})$ is quasi-isometric to the curve complex;
        \item $H\cap \mathcal{T}$ generates $H$;
        \item $H$ is hyperbolically embedded in $(G,\mathcal{T})$, meaning that $\Cay{\MCG}{H\cup \mathcal T}$ is hyperbolic and that $H$ is proper with respect to the metric $\hat\dist$ obtained from measuring the length of a shortest path in $\Cay{\MCG}{H\cup \mathcal T}$ with the property that any two consecutive vertices in $H$ can only be connected by edges in $\mathcal T$.
    \end{itemize} 
    By work of \cite{ABD}, $H$ is \emph{convex-cocompact} in the sense of \cite{Kent-Leininger}, meaning that orbit maps $H\to \CS$, with respect to any finite generating set, are quasi-isometric embeddings.
\end{defn}

    \noindent Lemma \ref{lem:fixing_hhg_for_hhe} describes conditions on a subgroup $M\le H$ ensuring that the quotient by the normal closure of $M$ satisfies Convention \ref{conv:surface-inherited}. The Lemma relies on \cite[Theorem 6.2]{hhs_asdim}, which in turn should be thought of a "Dehn filling" result for hierarchically hyperbolically embedded subgroups.

\begin{lem}\label{lem:fixing_hhg_for_hhe} Let $S$ be a surface of complexity at least $2$ which is not a torus with two punctures, and let $H\le \MCG$ be a finitely generated subgroup which is hierarchically hyperbolically embedded. There exists a finite set $F\subset H-\{1\}$ such that, if a normal subgroup $M\unlhd H$ avoids $F$ and $H/M$ is hyperbolic, then the quotient $\MCG/N$ by the normal closure of $M$ satisfies Convention \ref{conv:surface-inherited}. 
\end{lem}

\begin{proof}
Since orbit maps of $H$ to $\CS$, with respect to any finite generating set, are quasi-isometric embeddings, there exists $R\ge0$ with the following property: if we denote by $B(1,R)$ the ball of radius $R$ centred at the identity of $H$, every subgroup $M$ which avoids the finitely many elements of $B(1,R)-\{1\}$ must act on $\CS$ with minimum translation length at least $9$. 

Now we claim that, if $M$ avoids a possibly bigger finite set, then the minimum translation length of the normal closure $N=\langle\langle M\rangle \rangle$ is also at least $9$. This follows from results of \cite{dgo} on hyperbolically embedded subgroups, but we can give a proof based on the machinery from \cite{ClayMangahas}. There the authors take as input our setting and output a \emph{projection complex}, which we shall think of as the data of:
\begin{itemize}
    \item a graph $\mathcal P$, whose vertices correspond in our case to orbits of the form $gH(x_0)\subset \CS$ for all $g\in\MCG$ and for a fixed basepoint $x_0\in\CS^{(0)}$;
    \item a notion of projection between vertices of $\mathcal P$, which for us is given by projections onto the quasi-convex subsets $gH(x_0)\subset \CS$;
    \item an action $N\circlearrowleft\mathcal{P}$, such that the stabiliser of $gH(x_0)$ is precisely $gMg^{-1}$ (this follows from \cite[Theorem 2.14]{dgo}, where $N$ is described as an infinite free product of conjugates of $M$).
\end{itemize}

An example of the construction of the projection complex for the case when $M$ is generated by a pseudo-Anosov mapping class can be found in \cite[Section 8.1]{ClayMangahas}.
 
Now, for every $x\in\CS^{(0)}$ there exists $g\in\MCG$ such that the orbit $gH(x_0)$ is uniformly close to $x$, since $\bigcup_{g\in\MCG} gH=\MCG$ and the action of $\MCG$ on the curve graph is cofinite. Now let $n\in N-\{1\}$, and we want to say that $\dist_{\CS}(x,nx)\ge 8$. Notice that we can think of $gH(x_0)$ and $ngH(x_0)$ as vertices of $\mathcal P$. Therefore, by \cite[Proposition 3.2]{ClayMangahas} one of the following must hold:
\begin{itemize}
    \item $n$ fixes $gH(x_0)$, and therefore it belongs to $gMg^{-1}$. In particular, the translation length of $n$ is at least $9$, since translation length is preserved under conjugation.
    \item $n$ fixes $ngH(x_0)$, and for the same reason its translation length is at least $9$.
    \item There exists a third vertex of $\mathcal P$, that is, a third orbit $g'H(x_0)$, on which $gH(x_0)$ and $ngH(x_0)$ have distant projection. Now, the projection of $x$ is close to the projection of $gH(x_0)$, and similarly $n x$ projects close to $ngH(x_0)$. Therefore $x$ and $nx$ have distant projections to $g'H(x_0)$, and in turn this means that $x$ and $nx$ are far from each other in $\CS$.
\end{itemize}
Now, \cite[Theorem 6.2]{hhs_asdim} states that, if $M$ avoids a (possibly larger) finite set and $H/M$ is hyperbolic, then $\MCG/N$ admits a hierarchically hyperbolic structure with the following properties:
    \begin{itemize}
        \item The index set is $\mathfrak T=\ov\frakS \cup \{NgH\}_{g\in \MCG}$;
        \item Orthogonality and nesting in $\ov\frakS$ correspond to orthogonality and nesting between representatives;
        \item If $\ov U\in \ov\frakS$ is not the maximal element, then $\C\ov U$ is isometric to $\C U$ via the projection map, for any $U\in \ov U$;
        \item If $T\in\mathfrak T$ is of the form $NgH$ then $T$ is orthogonal to no other domain.
    \end{itemize}
    To refine this structure, one can apply the procedure from \cite[Theorem 3.7]{ABD} to remove the domains without orthogonals, and get a new structure $(\MCG/N, \mathfrak R)$ such that:
    \begin{itemize}
        \item The new index set coincides with the maximal element, together with all elements $\mathfrak T$ admitting an orthogonal domain with unbounded coordinate space. Hence $\mathfrak R=\ov \frakS$;
        \item Orthogonality and nesting are inherited from the original structure;
        \item If $\ov U\in \ov\frakS$ is not the maximal element, then $\C\ov U$ is unchanged;
        \item The top-level coordinate space is the space $\widehat X$ obtained from $\MCG/N$ after coning off the factors $F_{\ov U}$ for every non-maximal $\ov U\in \ov\frakS$. These factors correspond to the stabilisers of the action of $\MCG/N$ on $\CSN$. Therefore, by a version of the Milnor-\v{S}varc Lemma described in e.g. \cite[Theorem 5.1]{CC}, $\CSN$ is quasi-isometric to $\widehat X$. 
    \end{itemize}
   It is now clear that the above structure satisfies Convention \ref{conv:surface-inherited}, as required.
\end{proof}

\noindent As a special case of Lemma~\ref{lem:fixing_hhg_for_hhe}, we have that the quotient by a suitable power of a pseudo-Anosov element fits in our framework:
\begin{cor}\label{cor:pA_large}
    Let $S$ be a surface of complexity at least $2$ which is not a torus with two punctures, and let $f\in\MCG$ be pseudo-Anosov element. There exists $K_0\in\mathbb{N}$ such that, if $K\in\mathbb{Z}-\{0\}$ is a multiple of $K_0$, then the quotient $\MCG/\langle\langle f^K\rangle\rangle$ satisfies Convention \ref{conv:surface-inherited}.
\end{cor}

\begin{proof}
    The \emph{elementary closure} $E(f)$ of a pseudo-Anosov element $f\in\MCG$, that is, the unique maximal virtually-cyclic subgroup containing $f$, is hierarchically hyperbolically embedded (this ultimately follows from \cite[Theorem 6.50]{dgo}). Moreover  $\langle f\rangle $ has finite index in $E(f)$, therefore there exists $K_0$ such that $\langle f^{K_0}\rangle $ is normal in $E(f)$. We can further assume that $\langle f^{K_0}\rangle $ avoids the finite set $F\subset E(f)$ which appears in Lemma \ref{lem:fixing_hhg_for_hhe}, so that the hypotheses of the latter are satisfied by $\langle f^{K}\rangle $ whenever $K$ is a non-trivial multiple of $K_0$.
\end{proof}

\noindent We conclude this Section by showing that random quotients satisfy Convention \ref{conv:surface-inherited}. Then Theorems~\ref{thmintro:comb_for_random} to \ref{thmintro:alg_for_rand} will follow as special cases of Theorems~\ref{thmintro:comb} to \ref{thmintro:alg}. Let us first define which probability measures we allow on $\MCG$. 

\begin{defn}\label{defn:permissible}
    Let $G$ be a group acting acylindrically on a hyperbolic space $X$. Given a probability measure $\mu$ on $G$, define the \emph{support} of $\mu$ as $\Supp(\mu)=\{g\in G\mid \mu(g)>0\}$, and let $\Gamma_\mu$ be the semi-group generated by $\Supp(\mu)$. A probability measure $\mu$ is \emph{permissible} if it is:
    \begin{itemize}
        \item \textit{bounded}: The support of $\mu$ ha bounded orbits on $X$;
        \item \textit{reversible}: $\Gamma_\mu$ is a subgroup of $G$;
        \item \textit{non-elementary}: $\Gamma_\mu$ acts non-elementarily on $G$;
        \item  If we denote by $E(\mu)$ the maximal finite subgroup normalised by $\Gamma_\mu$, which exists by e.g. \cite[Lemma 5.5]{H16}, then $E(\mu)=E(G)$.
    \end{itemize}
\end{defn}

Given permissible probability measures $\mu_1,\ldots,\mu_k$ on $\MCG$, with respect to the action on $\CS$, let $\{X^1_n,\ldots,X^k_n\}_{n\in\mathbb{N}}$ be the associated random walks, and let $N_n=\langle\langle X^1_n,\ldots,X^k_n\rangle\rangle$. Call $\MCG/N_n$ a \emph{random quotient} of $\MCG$. Moreover, recall that a property holds \emph{asymptotically almost surely} (a.a.s.) if $\MCG/N_n$ satisfies the property with probability approaching $1$ as $n\to \infty$.

\begin{lem}\label{lem:random_are_LT}
    Let $S$ be a surface of complexity at least $2$ which is not a torus with two punctures. A random quotient of $\MCG$, with respect to permissible probability measures, satisfies Convention~\ref{conv:surface-inherited} asymptotically almost surely.
\end{lem}

\begin{proof} $\MCG/N_n$ is a.a.s. a large translation quotient by \cite[Remark 5.14]{randomquot_HHG}, and a HHG by \cite[Theorem A]{randomquot_HHG}. Furthermore, by the explicit description of the HHG structure from \cite[Construction 4.26 and Remark 4.28]{randomquot_HHG}, we see that $\MCG/N_n$ a.a.s. satisfies Convention~\ref{conv:surface-inherited}, as required.
\end{proof}

\section{Are injective self-maps isomorphisms?}\label{sec:questions}
\noindent The following question was asked by Jason Behrstock, during an exchange on a first draft of this paper:
\begin{questintro}\label{quest:injective}
Let $S$ be either a surface of complexity at least $2$, excluding $S_{2,0}$ and $S_{1,2}$. Does there exist $D\ge 0$ such that, if the minimum translation length of a normal subgroup $N$ is at least $D$, \emph{any injection} from a finite index subgroup $H\le \MCG/N$ to $\MCG/N$ is induced by an inner automorphism?
\end{questintro}
\noindent A positive answer would imply that a large translation quotient is \emph{co-Hopfian}, that is, every self-monomorphism is an isomorphism.

\begin{rem}
    Question~\ref{quest:injective} is a strengthening of Theorem \ref{thmintro:alg}, whose techniques, however, do not carry over. Indeed, the arguments in Section~\ref{sec:algrig} rely on Theorem \ref{thmintro:qi} to produce a candidate conjugation, so we need the injection $H\hookrightarrow \MCG/N$ to be a quasi-isometry; in turn, this is true if and only if the image has finite index. 
\end{rem}

\noindent Question \ref{quest:injective} has a positive answer for mapping class groups, by work of Irmak \cite{Irmak1, Irmak2, Irmak3} and Behrstock-Margalit \cite{Behrstock-Margalit}. The main strategy of all these papers is articulated in two steps:
\begin{itemize}
    \item First, one proves that any \emph{superinjective map} of $\CS$ (that is, any self-map preserving adjacency and non-adjacency between vertices) is induced by an element of $\MCG$, generalising Ivanov's theorem.
    \item Then one shows that any injection from a finite-index subgroup of $\MCG$ into $\MCG$ maps powers of Dehn Twists to powers of Dehn Twists, preserving commutation and non-commutation. Therefore the injection induces a superinjective map of $\CS$, and this produces a candidate element of $\MCG$ whose conjugation should restrict to the given injection.
\end{itemize}
Therefore the following sub-questions naturally arise:

\begin{questintro}
    In the setting of Question \ref{quest:injective}, is any superinjective map of $\CS/N$ induced by an element of $\MCG/N$? 
\end{questintro}

\noindent This would be a generalisation of Theorem \ref{thmintro:comb}, which I believe to be true. One could try to lift a superinjective map of $\CS/N$ to a superinjective map of $\CS$, and then conclude by the results of Irmak and Behrstock-Margalit.

\begin{questintro}
    In the setting of Question \ref{quest:injective}, does the injection $H\hookrightarrow \MCG/N$ map powers of Dehn Twists to powers of Dehn Twist?
\end{questintro}

\noindent Irmak's proof is ultimately a refined version of Ivanov's algebraic description of Dehn Twists from \cite{Ivanov_DT}, in terms of the centres of their centralisers in $\MCG$. It is possible that a similar characterisation holds in our quotients as well. 

\bibliography{biblio}
\bibliographystyle{alpha}

\end{document}